\theoremstyle{plain}
\renewcommand{\theequation}{\arabic{section}.\arabic{equation}}
\renewcommand\thefigure{\thesection.\@arabic\c@figure}
\newtheorem{thm}{\bf Theorem}
\newenvironment{theorem}{\begin{thm}} {\end{thm}}
\newtheorem{cor}{\bf Corollary}
\newtheorem{prop}{Proposition}[section]
\newtheorem{lmm}{\bf Lemma}
\theoremstyle{remark}
\newtheorem{rem}{Remark}[section]
\numberwithin{table}{section}
\def \af {\alpha}
\def \bt {\beta}
\renewcommand  \varphi \phi
\newcommand{\bs}[1]{\boldsymbol{#1}}
\renewcommand \wedge \times
\begin{document}
\bibliographystyle{plain}
\baselineskip 14pt

{\title[Collocation methods and Birkhoff interpolation] {A well-conditioned collocation method using pseudospectral  integration matrix}
\author[L. Wang,\;\;\; M. Samson\;\;\;  $\&$\;\;  X. Zhao] {Li-Lian Wang,\;  Michael Daniel Samson\;
and\;
 Xiaodan Zhao}
\thanks{\noindent Division of Mathematical Sciences, School of Physical
and Mathematical Sciences,  Nanyang Technological University,
637371, Singapore. The research of the authors is partially supported by Singapore MOE AcRF Tier 1 Grant (RG 15/12), and Singapore A$^\ast$STAR-SERC-PSF Grant (122-PSF-007).
}
\keywords{Birkhoff interpolation, Integration preconditioning, collocation method, pseudospectral differentiation matrix,
pseudospectral integration matrix, condition number}
 \subjclass{65N35, 65E05, 65M70,  41A05, 41A10, 41A25}

\begin{abstract} In this paper, a well-conditioned collocation method is constructed for solving general $p$-th
order linear differential equations with various types of boundary conditions.
Based on a suitable Birkhoff interpolation, 
we obtain a new set of polynomial basis functions that results in a collocation scheme
with two important features: the condition number of the linear system is  independent of the number of collocation points;
and  the underlying  boundary conditions are imposed exactly.
Moreover, the new basis leads to exact inverse of the pseudospectral differentiation matrix (PSDM)   of
the highest derivative (at interior collocation points), which is therefore called the  pseudospectral integration matrix (PSIM).
We show that PSIM  produces the optimal integration preconditioner, and  stable collocation solutions
 with even thousands of points.
\end{abstract}
 \maketitle

\vspace*{-15pt}

\section{Introduction}\label{sect1}

The spectral collocation method is implemented in  physical space, and approximates derivative values  by
direct differentiation of the Lagrange interpolating polynomial at a set of Gauss-type  points.
Its fairly straightforward realization is akin to the high-order finite difference method (cf.  \cite{Forn96,Tref00}).
This marks its advantages over the spectral method using modal basis functions
in dealing with variable coefficient and/or nonlinear problems (see various monographs on spectral methods
\cite{gottlieb1977numerical,Guobk98,Boyd01,CHQZ06,HGG07,ShenTangWang2011}).
 However, the practitioners  are plagued with the involved ill-conditioned linear systems
 (e.g., the condition number of the $p$-th order differential operator grows  like $N^{2p}$).
 This longstanding drawback causes  severe degradation of expected spectral accuracy \cite{TrTr87}, while the
 accuracy of machine zero  can be well observed from the well-conditioned spectral-Gakerkin method (see e.g., \cite{Shen94b}).
 In practice,  it becomes rather prohibitive to solve the  linear system by a direct solver or even an iterative method,
 when the number of collocation points is large.

One significant attempt  to circumvent this barrier is the use of  suitable {\em preconditioners.}
Preconditioners built on low-order  finite difference or finite element approximations can be found in e.g., \cite{Dev.M85,Dev.M90,CA85,Kim.P96,Kim.P97,CA10}.
The {\em integration preconditioning} (IP) proposed by Coutsias, Hagstrom and Hesthaven et al. \cite{CHT96,coutsias1996integration,Hesthaven98} (with ideas from Clenshaw  \cite{clenshaw1957numerical})
 has proven to be  efficient. 
We highlight that  the IP in  Hesthaven \cite{Hesthaven98} led to a significant reduction of the condition number from $O(N^2)$
to  $O(\sqrt N)$ for second-order differential linear operators with Dirichlet boundary conditions (which were imposed by the penalty method \cite{FG88}).
Elbarbary \cite{Elbarbary06} improved the IP in \cite{Hesthaven98} through carefully  manipulating the involved singular matrices and imposing the boundary conditions by some auxiliary equations.
 Another remarkable  approach is the {\em spectral integration method} proposed by Greengard  \cite{Greengard91}  (also see
\cite{Zebib84}), which recasts the differential form into integral  form, and then approximates
 the solution by orthogonal polynomials. This method was incorporated into the {\tt chebop} system \cite{DBT08,Dris10}.
 A  relevant approach by El-Gendi \cite{El69} is without reformulating the differential equations, but
 uses the integrated Chebyshev polynomials as basis functions. Then the spectral integration matrix (SIM) is employed in place of
PSDM to
 obtain much better conditioned linear systems (see e.g., \cite{MM02,ghoreishi2004tau,Muite10,ElS13} and the references therein).

In this paper, we take a very different routine  to  construct well-conditioned collocation methods.  The essential idea
is to associate the highest differential operator and  underlying boundary conditions with a suitable
{\em  Birkhoff interpolation} (cf. \cite{BirkhoffBk,shi2003theory}) that interpolates the derivative values at  interior collocation
points, and interpolate the boundary data at  endpoints.  This leads to the so-called
{\em Birkhoff interpolation basis polynomials} with the following distinctive  features:
\vspace*{-1pt}
\begin{itemize}
\item [(i)] Under the new basis, the linear system of a usual collocation scheme is well-conditioned, and
  the matrix of the highest derivative is diagonal or identity. Moreover, the underlying boundary conditions are imposed exactly.
    This technique can be viewed as the collocation analogue of the well-conditioned spectral-Galerkin method
    (cf. \cite{Shen94b,Shen02b,Guo.SW06}) (where
     the matrix of the highest derivative in the Galerkin system is  diagonal under certain modal  basis functions).
\item [(ii)] The new basis  produces the {\em exact inverse} of PSDM of the highest derivative
(involving only  interior collocation points). This inspires us to introduce the concept of
pseudospectral integration matrix (PSIM). The integral expression of the new basis
 offers a stable way to compute  PSIM and the inverse of PSDM even for thousands of collocation points.

\item [(iii)] This  leads to optimal integration preconditioners
for the usual collocation methods, and enables us  to have insights into the  IP in \cite{Hesthaven98,Elbarbary06}.
Indeed, the preconditioning  from Birkhoff interpolation is natural and optimal.
\end{itemize}

 We point out that Castabile and Longo \cite{CoL10} touched on the application of Birkhoff interpolation (see \eqref{Birkhoffb})
 to  second-order boundary value problems (BVPs),
 but the focus of this work was largely on the analysis of  interpolation  and quadrature errors.
  Zhang  \cite{ZhangInp2012} considered the Birkhoff interpolation (see \eqref{Birkhoffa}) in  a very different context of superconvergence of polynomial
  interpolation.  Collocation methods based on a special Birkhoff quadrature rule  for Neumann problems were discussed in \cite{EZGu99,WangG09}.
   It is also noteworthy  to point out recent interest in developing  spectral solvers using modal basis functions
   (see e.g., \cite{LivP10,ChenS12,OlverTownsend}).

The rest of the paper is organized as follows.  In Section \ref{sect2},
we review several topics that are pertinent to the forthcoming development.
 In Section \ref{sect3}, we elaborate on the new methodology for second-order BVPs.
 In Section \ref{sect4},  we present miscellaneous extensions of the approach to first-order initial value problems (IVPs), higher order equations
 and multiple dimensions.

\section{Birkhoff interpolation and pseudospectral differentiation matrix} \label{sect2}

In this section, we briefly review several topics  directly bearing on the subsequential algorithm and analysis.
We also introduce the notion of pseudospectral integration matrix, which is a central piece of puzzles for our new approach.

\subsection{Birkhoff interpolation}
Let  $\{x_j\}_{j=0}^{N}\subseteq [-1,1]$ be a set of distinct interpolation points, which are  arranged  in ascending order:
 \begin{equation}\label{interpoint}
-1\le x_0<x_1<\cdots<x_{N-1}<x_N\le 1.
\end{equation}
Given $K+1$  data $\{y_j^m\}$ (with $K\ge N$),  we consider the interpolation problem (cf. \cite{BirkhoffBk,shi2003theory}):
\begin{equation}\label{birkhoffprob}
\left\{\begin{aligned}
&\text{Find a polynomial $p_K\in {\mathbb P}_K$  such that}\\
&p_K^{(m)}(x_j)=y_j^m \quad   \text{($K+1$ equations)},
\end{aligned}\right.
\end{equation}
where $ {\mathbb P}_K$ is the set of all algebraic polynomials of degree at most $K,$ and
the subscript $m$ indicates the order of specified derivative values.

We have the   Hermite interpolation if for each $j,$ the orders of derivatives in
\eqref{birkhoffprob} form an unbroken sequence, $m=0,1,\cdots, m_j.$ In this case, the interpolation polynomial $p_K$  uniquely exists and can be given by an explicit formula.
 On the other hand, if some of the sequences are broken, we have the {\em Birkhoff interpolation.} However,
 the existence and uniqueness of the Birkhoff interpolation polynomial are not guaranteed.
For example, for  \eqref{birkhoffprob} with $K=N=2,$ and the given data $\{y_0^0, y_1^1, y_2^1\}$,
 the quadratic polynomial $p_2(x)$ does not exist, when $x_1=(x_0+x_2)/2.$
 This happens to Legendre/Chebyshev-Gauss-Lobatto points, where $x_0=-1,x_1=0$ and $x_2=-1$.
We refer to the monographs  \cite{BirkhoffBk,shi2003theory} for comprehensive discussions of Birkhoff interpolation.

 In this paper, we will consider special Birkhoff interpolation  problems at Gauss-type points, and
some variants that incorporate with mixed boundary data, for instance, $a p'_K(-1)+bp_K(-1)=y_0$  for constants $a,b.$

\subsection{Pseudospectral differentiation matrix}
The pseudospectral differentiation matrix (PSDM) is an essential  building block for collocation methods.
Let $\{x_j\}_{j=0}^N$ (with $x_0=-1$ and $x_N=1$)  be a set of Gauss-Lobatto (GL) points, and let
  $\{l_j\}_{j=0}^N$ be the Lagrange interpolation basis polynomials such that
  $ l_j\in {\mathbb P}_N $ and $l_j(x_i)=\delta_{ij},$ for $ 0\le i,j\le N.$
We have
  \begin{equation}\label{diffpn}
  p(x)=\sum_{j=0}^N p(x_j) l_j(x),\quad \forall p\in {\mathbb P}_N.
  \end{equation}
Denoting  $d_{ij}^{(k)}:=l_{j}^{(k)}(x_i),$ we introduce the matrices
\begin{equation}\label{DkDkdefn}
 \bs D^{(k)}=\big(d_{ij}^{(k)}\big)_{0\le i,j\le N},\quad  \bs D^{(k)}_{\rm in}=\big(d_{ij}^{(k)}\big)_{1\le i,j\le N-1},\quad k\ge 1.
 \end{equation}
 Note that $\bs D^{(k)}_{\rm in}$ is obtained by deleting the last and first rows and columns of $\bs D^{(k)},$
 so it is associated with  interior GL points.  In particular, we denote  $\bs D=\bs D^{(1)},$ and $\bs D_{\rm in}=\bs D^{(1)}_{\rm in}$. The matrix $\bs D^{(k)}$ is usually referred to as the $k$-th order PSDM.
 We highlight  the following property (see e.g.,  \cite[Theorem 3.10]{ShenTangWang2011}):
 \begin{equation}\label{DkDk}
\bs D^{(k)}=\bs D\bs D\cdots \bs D=\bs D^k,\quad k\ge 1,
 \end{equation}
so  higher-order PSDM is a product of the first-order PSDM.

Set
\begin{equation}\label{diffps}
\bs p^{(k)}:=\big(p^{(k)}(x_0),\cdots, p^{(k)}(x_N)\big)^t,  \quad \bs p:=\bs p^{(0)}.
\end{equation}
By \eqref{diffpn} and \eqref{DkDk}, the pseudospectral differentiation process is performed via
\begin{equation}\label{diffpr}
\bs D^{(k)} \bs p=\bs D^{k} \bs p=\bs p^{(k)},\quad k\ge 1.
\end{equation}
It is noteworthy that differentiation via \eqref{diffpr} suffers from
significant round-off errors  for large $N,$ due to the involvement of ill-conditioned operations (cf. \cite{weideman2000matlab}).
The matrix  $\bs D^{(k)}$ is singular (a simple proof:  $\bs D^{(k)}\bs 1=\bs 0,$ where $\bs 1=(1,1,\cdots, 1)^t,$
so the rows of $\bs D^{(k)}$ are linearly dependent), while  $\bs D^{(k)}_{\rm in}$ is nonsingular.
In addition, the condition numbers of $\bs D^{(k)}_{\rm in}$  and $\bs D^{(k)}-\bs I_{N+1}$  behave like $O(N^{2k}).$
We refer to  \cite[Section 4.3]{CHQZ06} for review of eigen-analysis for PSDM.


\subsection{Legendre and Chebyshev polynomials} We collect below some properties of Legendre and Chebyshev polynomials
(see e.g., \cite{szeg75,ShenTangWang2011}), to be used throughout this paper.

Let  $P_k(x), x\in I:=(-1,1)$ be the Legendre polynomial of degree $k.$  The Legendre polynomials are mutually orthogonal:
\begin{equation} \label{Legorth}
\int_{-1}^1 P_k(x) P_j(x)\, dx=\gamma_k  \delta_{kj}, \quad \gamma_k=\frac 2{2k+1}.
\end{equation}
There hold
\begin{align}\label{legretcff}
P_k(x)=\frac{1} {2k+1}\big(P_{k+1}'(x)-P_{k-1}'(x)\big),\quad k\ge 1,
\end{align}
and
\begin{align} \label{legpm1}
 P_k(\pm 1)=(\pm 1)^k,\quad P_k'(\pm 1)=\frac 1 2 (\pm 1)^{k-1} k(k+1).
\end{align}

The Legendre-Gauss-Lobatto (LGL) points
 are zeros of  $(1-x^2)P_N'(x),$ and the corresponding  quadrature weights are
 \begin{equation}\label{LGLweights}
   \omega_j=\frac 2 {N(N+1)}\frac {1}{P_N^2(x_j)},\quad 0\le j\le N.
 \end{equation}
Then  the LGL quadrature has the exactness
\begin{equation}\label{quadexac}
\int_{-1}^1 \phi(x) dx=\sum_{j=0}^N \phi(x_j)\omega_j,\quad \forall \phi\in {\mathbb P}_{2N-1}.
\end{equation}

The Chebyshev polynomials: $T_k(x)=\cos(k\, {\rm arccos}(x))$ are mutually orthogonal
\begin{equation} \label{Chebyorth}
 \int_{-1}^1 \frac{T_k(x) T_j(x)}{\sqrt {1-x^2}} dx=\frac{c_k\pi }{2} \delta_{kj},
\end{equation}
where $c_0=2$ and $c_k=1$ for $k\ge 1.$  We have
\begin{align} \label{chebgretcff}
 T_k(x)=\frac 1 {2(k+1)} T_{k+1}'(x)-\frac 1 {2(k-1)}T_{k-1}'(x),\quad k\ge 2,
\end{align}
and
\begin{align}\label{chebpm1}
 T_k(\pm 1)=(\pm 1)^k,\quad T_k'(\pm 1)=(\pm 1)^{k-1} k^2.
\end{align}
The Chebyshev-Gauss-Lobatto (CGL) points and quadrature weights are
 \begin{equation}\label{CGLnodeweights}
  x_j=-\cos(jh),\;\; 0\le j\le N;\;\;  \omega_0=\omega_N=\frac h 2,\;\; \omega_j=h,\;\; 1\le j\le N-1;\;\;  h=\frac \pi N.
 \end{equation}
 Then we have the exactness
 \begin{equation}\label{Chebquadexac}
\int_{-1}^1 \frac{\phi(x)}{\sqrt{1-x^2}}\, dx=\frac{\pi} {2N}\big(\phi(-1)+\phi(1)\big)+\frac{\pi} N\sum_{j=1}^{N-1} \phi(x_j),\quad \forall \phi\in {\mathbb P}_{2N-1}.
\end{equation}

\subsection{Integration preconditioning} We briefly examine the essential idea of constructing integration preconditioners in  \cite{Hesthaven98,Elbarbary06} (inspired by \cite{CHT96,coutsias1996integration}).


We consider for example the Legendre case.  By \eqref{Legorth} and \eqref{quadexac},
  \begin{equation}\label{ljPk}
l_j(x)=\sum_{k=0}^N \frac{\omega_j}{\tilde \gamma_k} P_k(x_j)P_k(x), \quad 0\le j\le N,
\end{equation}
where $\tilde \gamma_k=2/(2k+1),$ for $0\le k\le N-1,$ and $\tilde \gamma_N=2/N$. Then 
\begin{equation}\label{ljPknewa}
l_j''(x)=\sum_{k=2}^N \frac{\omega_j}{\tilde \gamma_k} P_k(x_j)P_k''(x).
\end{equation}
 The  key observation in  \cite{Hesthaven98,Elbarbary06}  is that {\em pseudospectral differentiation
 process actually involves the ill-conditioned transform:}
 \begin{equation}\label{newmatrix}
{\rm span}\big\{P_k'' : 2\le k\le N\big\}:= Q_2^{N}\; \longmapsto\;  Q_0^{N-2}:={\rm span}\big\{P_k : 0\le k\le N-2 \big\}.
 \end{equation}
 Indeed, we have (see \cite[(3.176c)]{ShenTangWang2011}):
\begin{equation}\label{spdiff}
P_k''(x)=\sum_{k+l\; {\rm even}}^{0\le l\le k-2} (l+1/2)\big(k(k+1)-l(l+1)\big)P_l(x),
\end{equation}
so the transform matrix is dense and the coefficients grow like $k^2.$

However, {\em the inverse transform: $Q_0^{N-2} \mapsto  Q_2^{N}$ is sparse and well-conditioned},  thanks to the  ``compact" formula, derived from \eqref{legretcff}:
\begin{equation}\label{relationPddP}
P_k(x)=\alpha_k P''_{k-2}(x)+ \beta_k P''_{k}(x)+\alpha_{k+1}P''_{k+2}(x), \quad\;\;k\ge 2,
\end{equation}
where  the  coefficients are
\begin{equation}\label{coeffaa}
\alpha_k=\frac{1}{(2k-1)(2k+1)},\quad  \beta_{k}=-\frac{2}{(2k-1)(2k+3)},
\end{equation}
which decay like $k^{-2}.$

Based on \eqref{relationPddP}, \cite{Hesthaven98,Elbarbary06} attempted to precondition the collocation system by the ``inverse"  of  $\bs D^{(2)}.$ However,   since $\bs D^{(2)}$ is singular, there exist multiple ways to manipulate the involved singular matrices.   The boundary conditions  were imposed by the penalty method (cf. \cite{FG88}) in \cite{Hesthaven98}, and using auxiliary equations
in \cite{Elbarbary06}.  Note that the condition number of the preconditioned system  for e.g., the operator $\frac{d^2}{dx^2}-k$ with Dirichlet boundary conditions, behaves like  $O(\sqrt N).$

\subsection{Pseudospectral integration matrix}\label{outlins}
We take a quick glance at the idea of the new method in Section \ref{sect3}.
Slightly different from \eqref{diffpr},  we consider pseudospectral differentiation merely on interior GL points:
\begin{equation}\label{newdiffpr}
\widetilde{\bs D}^{(2)} \bs p=\tilde{\bs p}^{(2)}\;\; {\rm where}\;\; \tilde {\bs p}^{(2)}:=\big(p(-1),p^{(2)}(x_1)\cdots, p^{(2)}(x_{N-1}), p(1)\big)^t,
\end{equation}
and the matrix $\widetilde{\bs D}^{(2)}$ is obtained by replacing the first and last rows of
 $\bs D^{(2)}$  by the  row vectors $\bs e_1=(1,0,\cdots,0)$ and $\bs e_N=(0,\cdots,0,1),$ respectively.
 Note that the matrix $\widetilde{\bs D}^{(2)}$  is nonsingular.  More importantly, this also allows to impose boundary conditions exactly.

Based on Birkhoff interpolation, we obtain the exact inverse matrix, denoted by $\bs B,$ of  $\widetilde{\bs D}^{(2)}$ from the underlying Birkhoff interpolation basis.  Then we have the inverse process of \eqref{newdiffpr}:
\begin{equation}\label{newinteg}
\bs B\tilde{\bs p}^{(2)}=\bs p,
\end{equation}
which performs twice integration at the interior GL points, but remains the function values at endpoints
unchanged.  For this reason, we call $\bs B$ {\em the second-order pseudospectral integration matrix.}
It is important to point out that the computation of PSIM is stable  even for thousands of
collocation points, as all operations involve well-conditioned formulations (e.g., \eqref{relationPddP} is built-in).

\section{New collocation methods for second-order BVPs}\label{sect3}
\setcounter{equation}{0}
\setcounter{thm}{0}

In this section, we elaborate on the construction of the new approach outlined in Subsection \ref{outlins} in the context of
solving second-order BVPs.
We start with second-order BVPs with Dirichlet boundary conditions, and then consider general mixed boundary conditions
in late part of this section.

%

\subsection{Birkhoff interpolation at Gauss-Lobatto points}   Let $\{ x_j\}_{j=0}^N$
(with $x_0=-1$ and $x_N=1$) in \eqref{interpoint} be a set of  GL points. Consider the special case of \eqref{birkhoffprob}:
\begin{equation}\label{Birkhoffb}
\left\{
\begin{aligned}
& \text{Find $p\in {\mathbb P}_N$ such that  for any  $u\in C^2(I),$}\\
& p(-1)=u(-1);\;\; p''(x_j)=u''(x_j),\;\; 1\le j\le N-1;\;\; p(1)=u(1).
\end{aligned}\right.
\end{equation}
%
The Birkhoff interpolation polynomial $p$ of $u$ can be uniquely determined by
\begin{equation}\label{birkhoffint2}
p(x)=u(-1) B_0(x)+ \sum_{j=1}^{N-1} u''(x_j) B_j(x) + u(1) B_N(x),\quad x\in [-1,1],
\end{equation}
if one can find $\{B_j\}_{j=0}^N\subseteq {\mathbb P}_N,$ such that
\begin{align}
&B_0(-1)=1,\quad B_0(1)=0,\quad  B_0''(x_i)=0,\;\;\; 1\le i\le N-1; \label{H0basis1}\\
&B_j(-1)=0,\quad B_j(1)=0, \quad   B_j''(x_i)=\delta_{ij},\;\;\;  1\le i,j\le N-1; \label{H0basis2}\\
&B_N(-1)=0,\quad B_N(1)=1,\quad  B_N''(x_i)=0,\;\;\; 1\le i\le N-1.  \label{Hintbasis}
\end{align}
We call $\{B_j\}_{j=0}^N$ the {\em Birkhoff interpolation basis polynomials} of \eqref{Birkhoffb}, which are the counterpart of the Lagrange basis polynomials $\{l_j\}_{j=0}^N$.

  The basis $\{B_j\}_{j=0}^N$ can be uniquely expressed  by the following formulas.
\begin{theorem}\label{Birkhoffbasis2}
Let $\{x_j\}_{j=0}^N$ be a set of Gauss-Lobatto points. The Birkhoff  interpolation basis polynomials  $\{B_j\}_{j=0}^N$
defined in \eqref{H0basis1}-\eqref{Hintbasis}  are given by
\begin{align}
&B_0(x)=\frac{1 - x}{2},  \quad B_N(x)=\frac{1 + x}{2}; \label{interbasis1} \\
&B_j(x)=\frac{1+x} 2 \int_{-1}^1 (t-1) L_j(t)\, dt+\int_{-1}^x (x-t) L_j(t)\, dt, \;\;\; 1\le  j\le N-1, \label{interbasis2}
\end{align}
where  $\{L_j\}_{j=1}^{N-1}$ are the Lagrange basis polynomials {\rm (}of degree $N-2${\rm)} associated with $N-1$ interior Gauss-Lobatto points $\{x_j \}_{j=1}^{N-1},$ namely,
\begin{equation}\label{ljxexp2}
L_j(x)=\frac{Q_N(x)}{(x-x_j)Q_N'(x_j)},\quad Q_{N}(x)=\gamma_N  \prod_{j=1}^{N-1}(x-x_j),
\end{equation}
where $\gamma_N$ is any nonzero constant. Moreover, we have
\begin{equation}\label{interbasis1std}
B'_0(x)=-B'_N(x)=-\frac{1}{2};\;\;
B'_j(x)=\frac{1} 2\int_{-1}^1 (t-1) L_j(t)\, dt+\int_{-1}^x  L_j(t)\, dt, \;\;\; 1\le  j\le N-1.
\end{equation}
\end{theorem}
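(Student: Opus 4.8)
The plan is to first establish that the Birkhoff interpolation problem \eqref{Birkhoffb} is well-posed, so that the basis $\{B_j\}_{j=0}^N$ characterized by \eqref{H0basis1}--\eqref{Hintbasis} exists and is unique, and then to \emph{derive} the closed forms \eqref{interbasis1}--\eqref{interbasis2} by integrating the prescribed second-derivative data twice, rather than merely verifying the formulas after the fact.

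For well-posedness it suffices to show the homogeneous problem has only the trivial solution: if $p\in{\mathbb P}_N$ satisfies $p(-1)=p(1)=0$ and $p''(x_i)=0$ for $1\le i\le N-1$, then $p''\in{\mathbb P}_{N-2}$ vanishes at the $N-1$ distinct interior points $x_1,\dots,x_{N-1}$, hence $p''\equiv 0$; thus $p$ is affine, and the two endpoint conditions force $p\equiv 0$. Since \eqref{Birkhoffb} imposes $N+1$ linear conditions on the $(N+1)$-dimensional space ${\mathbb P}_N$, triviality of the kernel yields unique solvability, and in particular the existence and uniqueness of $\{B_j\}_{j=0}^N$ and of the representation \eqref{birkhoffint2}.

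For the explicit expressions, note that any $p\in{\mathbb P}_N$ meeting the interior conditions of \eqref{Birkhoffb} must have $p''=\sum_{j=1}^{N-1}u''(x_j)L_j\in{\mathbb P}_{N-2}$, since $p''$ is the unique polynomial of degree $\le N-2$ interpolating the values $u''(x_j)$ at $\{x_j\}_{j=1}^{N-1}$. Taylor's formula with integral remainder about $x=-1$ gives
\begin{equation*}
p(x)=p(-1)+p'(-1)(x+1)+\int_{-1}^x(x-t)\,p''(t)\,dt,
\end{equation*}
in which $p(-1)=u(-1)$ while $p'(-1)$ is the only parameter not yet fixed; enforcing $p(1)=u(1)$ determines it as
\begin{equation*}
p'(-1)=\tfrac12\big(u(1)-u(-1)\big)-\tfrac12\int_{-1}^1(1-t)\,p''(t)\,dt.
\end{equation*}
Substituting $p''=\sum_j u''(x_j)L_j$ and reading off the coefficients of $u(-1)$, of $u(1)$, and of each $u''(x_j)$ produces exactly $B_0(x)=\frac{1-x}2$, $B_N(x)=\frac{1+x}2$, and formula \eqref{interbasis2}; moreover $B_j\in{\mathbb P}_N$ because $\int_{-1}^x(x-t)L_j(t)\,dt$ is a double integral of a polynomial of degree $N-2$. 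Differentiating \eqref{interbasis1}--\eqref{interbasis2} once — using the Leibniz rule for $\int_{-1}^x(x-t)L_j(t)\,dt$, whose $x$-derivative equals $\int_{-1}^x L_j(t)\,dt$ since the boundary contribution at $t=x$ vanishes — gives \eqref{interbasis1std}; differentiating a second time returns $B_j''=L_j$, so $B_j''(x_i)=\delta_{ij}$ for $1\le i,j\le N-1$, while the endpoint values and the relations $B_0''=B_N''\equiv 0$ are built into the construction.

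This proof is essentially a bookkeeping exercise; the only points requiring care are the Leibniz rule for the term in which $x$ occurs simultaneously in the upper limit and in the integrand, and the degree count $B_j\in{\mathbb P}_N$. Once \eqref{interbasis1}--\eqref{interbasis2} are checked against \eqref{H0basis1}--\eqref{Hintbasis}, the uniqueness established above identifies them as \emph{the} Birkhoff interpolation basis, completing the argument.
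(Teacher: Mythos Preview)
Your proof is correct and follows essentially the same approach as the paper: both recognize that $B_j''=L_j$ (since both lie in ${\mathbb P}_{N-2}$ and agree at the $N-1$ interior nodes), then integrate twice with the endpoint conditions $B_j(\pm1)=0$, and differentiate once for \eqref{interbasis1std}. Your version is slightly more thorough in that you explicitly establish well-posedness of the Birkhoff problem and package the double integration via Taylor's formula with integral remainder, reading off all the $B_j$ simultaneously as coefficients in the expansion of $p$; the paper instead treats each $B_j$ individually and simply says ``solving this ordinary differential equation with boundary conditions $B_j(\pm1)=0$ leads to \eqref{interbasis2}.''
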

\begin{proof} One verifies readily from \eqref{H0basis1}-\eqref{H0basis2} that $B_0$ and $B_N$ must be linear polynomials given by \eqref{interbasis1}.
Using  \eqref{Hintbasis} and the fact $B_j''(x), L_j(x)\in {\mathbb P}_{N-2},$ we find that
$ B_j''(x)=L_j(x),$  so solving this ordinary differential equation with boundary conditions: $B_j(\pm1)=0,$ leads to the expression in  \eqref{interbasis2}. Finally, \eqref{interbasis1std} follows from \eqref{interbasis1}-\eqref{interbasis2}.
\end{proof}

Let $b_{ij}^{(k)}:=B_{j}^{(k)}(x_i),$ and define  the matrices
\begin{equation}\label{lobattocase}
\begin{split}
& \bs B^{(k)}=\big(b_{ij}^{(k)}\big)_{0\le i,j\le N}, \quad  \bs B^{(k)}_{\rm  in}=\big(b_{ij}^{(k)}\big)_{1\le i,j\le N-1},\quad k\ge 1.
\end{split}
 \end{equation}
 In particular, denote $b_{ij}:=B_{j}(x_i),$ $\bs B=\bs B^{(0)}$  and  $\bs B_{\rm in}=\bs B^{(0)}_{\rm in}.$
  \begin{rem}\label{BP2relation} The integration process \eqref{newinteg} is actually a direct consequence of \eqref{birkhoffint2}, as the Birkhoff interpolation polynomial of any $p\in {\mathbb P_N}$ is itself. \qed
\end{rem}


We have the following analogue  of \eqref{DkDk}, and this approach leads to the exact inverse of  second-order
PSDM associated with the interior interpolation points.
\begin{thm}\label{BkBkthm} There hold
 \begin{equation}\label{BJrela2}
\bs B^{(k)}=\bs D^{(k)}\bs B=\bs D^k \bs B=\bs D \bs B^{(k-1)},\quad k\ge 1,
\end{equation}
and
 \begin{equation}\label{BJrela200}
{\bs D}^{(2)}_{\rm in} \bs B_{\rm in}={\bs I}_{N-1},\quad \widetilde {\bs D}^{(2)}\bs B=\bs I_{N+1},
\end{equation}
where $\bs I_{M}$ is an  $M\times M$ identity matrix, and
the matrix $\widetilde {\bs D}^{(2)}$ is defined in \eqref{newdiffpr}.
\end{thm}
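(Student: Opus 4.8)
The plan is to build everything on the single observation that each $B_j$ lies in ${\mathbb P}_N$, so that the exact pseudospectral differentiation identity \eqref{diffpr} applies columnwise to $\bs B$.

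First I would establish \eqref{BJrela2}. The $j$-th column of $\bs B$ is $\big(B_j(x_0),\dots,B_j(x_N)\big)^t$, which is the vector $\bs p$ of \eqref{diffps} for $p=B_j\in{\mathbb P}_N$; hence by \eqref{diffpr}, $\bs D^{(k)}$ applied to this column equals $\big(B_j^{(k)}(x_0),\dots,B_j^{(k)}(x_N)\big)^t$, the $j$-th column of $\bs B^{(k)}$. Therefore $\bs B^{(k)}=\bs D^{(k)}\bs B$, and \eqref{DkDk} turns this into $\bs B^{(k)}=\bs D^k\bs B$; writing $\bs D^{(k)}=\bs D\,\bs D^{(k-1)}$ and repeating the argument with $k-1$ in place of $k$ gives $\bs B^{(k)}=\bs D\,\bs B^{(k-1)}$.

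Next I would derive the two inverse relations in \eqref{BJrela200} from the defining conditions \eqref{H0basis1}--\eqref{Hintbasis}. Taking $k=2$ above, the $(i,j)$ entry of $\bs B^{(2)}=\bs D^{(2)}\bs B$ is $B_j^{(2)}(x_i)=\sum_{l=0}^N d^{(2)}_{il}b_{lj}$. For $1\le j\le N-1$ we have $b_{0j}=B_j(-1)=0=B_j(1)=b_{Nj}$ by \eqref{H0basis2}, so for $1\le i,j\le N-1$ the $l=0$ and $l=N$ terms drop out and the sum collapses to the $(i,j)$ entry of $\bs D^{(2)}_{\rm in}\bs B_{\rm in}$; since \eqref{H0basis2} also gives $B_j^{(2)}(x_i)=\delta_{ij}$ there, we conclude $\bs D^{(2)}_{\rm in}\bs B_{\rm in}=\bs I_{N-1}$. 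For the augmented matrix I would compute $\widetilde{\bs D}^{(2)}\bs B$ by rows using the construction in \eqref{newdiffpr}: its first row equals the first row of $\bs B$, i.e. $\big(B_0(-1),\dots,B_N(-1)\big)=(1,0,\dots,0)$ by \eqref{H0basis1}--\eqref{Hintbasis}; its last row equals the last row of $\bs B$, i.e. $\big(B_0(1),\dots,B_N(1)\big)=(0,\dots,0,1)$; and for $1\le i\le N-1$ its $i$-th row coincides with the $i$-th row of $\bs D^{(2)}\bs B=\bs B^{(2)}$, whose entries $B_j^{(2)}(x_i)$ equal $\delta_{ij}$ for all $0\le j\le N$ because $B_0,B_N$ are linear by \eqref{interbasis1} while \eqref{H0basis2} handles $1\le j\le N-1$. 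Hence $\widetilde{\bs D}^{(2)}\bs B=\bs I_{N+1}$.

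I expect no serious obstacle here; the only delicate point is the bookkeeping in the middle step — verifying that the interior block of the $(N+1)\times(N+1)$ product $\bs D^{(2)}\bs B$ genuinely coincides with the $(N-1)\times(N-1)$ product $\bs D^{(2)}_{\rm in}\bs B_{\rm in}$, which is precisely where the boundary vanishing $B_j(\pm1)=0$ of the interior basis functions is used. A slicker alternative for the inverse relations, bypassing the matrix manipulation, is to invoke Remark \ref{BP2relation}: since each $p\in{\mathbb P}_N$ equals its own Birkhoff interpolant, evaluating \eqref{birkhoffint2} at the Gauss--Lobatto points gives $\bs p=\bs B\,\tilde{\bs p}^{(2)}=\bs B\,\widetilde{\bs D}^{(2)}\bs p$ for all such $p$, which forces $\bs B\,\widetilde{\bs D}^{(2)}=\bs I_{N+1}$ and hence $\widetilde{\bs D}^{(2)}\bs B=\bs I_{N+1}$, with the interior identity recovered by restricting to the indices $1\le i,j\le N-1$.
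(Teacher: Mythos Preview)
Your proof is correct and follows essentially the same route as the paper: the identity $\bs B^{(k)}=\bs D^{(k)}\bs B$ is obtained by applying the exact Lagrange differentiation \eqref{diffpr} to each $B_j\in{\mathbb P}_N$, and the inverse relations are read off from the boundary conditions \eqref{H0basis1}--\eqref{Hintbasis} after noting that $b_{0j}=b_{Nj}=0$ for $1\le j\le N-1$. You supply more detail for $\widetilde{\bs D}^{(2)}\bs B=\bs I_{N+1}$ than the paper (which simply says it ``follows directly''), and your alternative via Remark~\ref{BP2relation} is a nice shortcut, but the underlying argument is the same.
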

 \begin{proof} We first prove \eqref{BJrela2}. For any $\phi\in {\mathbb P}_N,$ we write $\phi(x)=\sum_{p=0}^N \phi(x_p) l_p(x),$ so we have
\begin{equation*}
 \phi^{(k)}(x)=\sum_{p=0}^N \phi(x_p) l_p^{(k)}(x),\quad k\ge 1.
 \end{equation*}
Taking $\phi=B_j (\in {\mathbb P}_N)$ and $x=x_i,$  we obtain
 \begin{equation}\label{dphik}
 b_{ij}^{(k)}=\sum_{p=0}^N d_{ip}^{(k)} b_{pj},\quad k\ge 1,
 \end{equation}
 which implies $\bs B^{(k)}=\bs D^{(k)}\bs B.$ The second equality follows from \eqref{DkDk}, and the last identity in
 \eqref{BJrela2} is due to the recursive relation $\bs B^{(k-1)}=\bs D^{k-1} \bs B$.

 We now turn to the proof of \eqref{BJrela200}.  It is clear that by  \eqref{H0basis2}, $b_{0j}=b_{Nj}=0$ for $1\le j\le N-1$ and
 $b_{ij}^{(2)}=\delta_{ij}$ for $1\le i,j\le N-1.$  Taking $k=2$ in \eqref{dphik} leads to
 $$
 \delta_{ij}=\sum_{p=1}^{N-1} d_{ip}^{(2)} b_{pj},\quad 1\le i,j\le N-1.
 $$
 This yields  ${\bs D}^{(2)}_{\rm in} \bs B_{\rm in}={\bs I}_{N-1},$ from which the second statement follows directly.
 \end{proof}

In view of Theorem \ref{BkBkthm}, we call $\bs B$ and $\bs B^{(1)}$ the second-order and first-order PSIMs, respectively.


\subsection{Computation of  PSIM}  
Now, we  present stable algorithms for computing the matrices $\bs B$ and $\bs B^{(1)}.$
Here, we just consider the Legendre and Chebyshev cases, but the method is extendable to general Jacobi polynomials straightforwardly.  For convenience,  we introduce the integral operators:
\begin{equation}\label{integral}
\partial_x^{-1} u(x)=\int_{-1}^x u(t)\,dt; \quad \partial_x^{-m}u(x)=\partial_x^{-1}\big(\partial_x^{-(m-1)}u(x)\big),\quad m\ge 2.
\end{equation}
By \eqref{legretcff}, \eqref{legpm1} and  \eqref{relationPddP}-\eqref{coeffaa},
\begin{align}  \label{Legenderivative}
& \partial^{-1}_xP_{k}(x)=\frac{1}{2k+1}\big(P_{k+1}(x)-P_{k-1}(x)\big),\;\; k\ge 1;\;\;\; \partial^{-1}_xP_{0}(x)=1+x,
\end{align}
and
\begin{equation}\label{Legen2edderivative}
\begin{split}
& \partial^{-2}_x P_{k}(x)=\frac{P_{k+2}(x)}{(2k+1)(2k+3)}-\frac{2P_{k}(x)}{(2k-1)(2k+3)}
+\frac{P_{k-2}(x)}{(2k-1)(2k+1)},\;\; k\ge 2;  \\
& \partial^{-2}_x P_0(x)=\frac{(1+x)^2} 2,\quad  \partial^{-2}_x P_1(x)=\frac{(1+x)^2(x-2)} 6.
\end{split}
\end{equation}
Similarly, we find from \eqref{chebgretcff} and  \eqref{chebpm1} that
\begin{equation}\label{chebyintegral}
\begin{split}
&  \partial_x^{-1} T_k(x)= \frac {T_{k+1}(x)} {2(k+1)}-\frac{T_{k-1}(x)}{2(k-1)}-\frac{(-1)^k}{k^2-1},\;\;\; k\ge 2; \\
& \partial_x^{-1} T_0(x)=1+x,\;\;  \partial_x^{-1} T_1(x)=\frac{x^2-1} 2.
\end{split}
\end{equation}
Using \eqref{chebyintegral} recursively yields
\begin{equation}\label{ddchebyintegral}
\begin{split}
 & \partial_x^{-2} T_k(x)= \frac {T_{k+2}(x)} {4(k+1)(k+2)}-\frac{T_{k}(x)}{2(k^2-1)}+
 \frac {T_{k-2}(x)} {4(k-1)(k-2)}-\frac{(-1)^k(1+x)}{k^2-1}\\
 &\qquad \qquad\;\; -\frac{3(-1)^k}{(k^2-1)(k^2-4)},\;\; k\ge 3;\\
& \partial^{-2}_x T_0(x)=\frac{(1+x)^2} 2,\;\;  \partial^{-2}_x T_1(x)
=\frac{(1+x)^2(x-2)} 6, \;\;  \partial^{-2}_x T_2(x)=\frac{x(1+x)^2(x-2)} 6.
\end{split}
\end{equation}
\begin{rem}\label{gjacobi}  Observe that $\partial^{-m}_xP_{k}(\pm 1)=0$ for all $k\ge m$ with $m=1,2,$
while $\partial^{-m}_xT_{k}(1)$  may not vanish.   The integrated Legendre and/or Chebyshev  polynomials are used to construct well-conditioned spectral-Galerkin methods,   $hp$ element methods (see  \cite{Shen94b,Shen02b,Guo.SW06}, and  \cite{Can09} for a review), and spectral integral methods (see e.g., \cite{clenshaw1957numerical,El69,Greengard91}). \qed
 \end{rem}

%
%
%

\begin{prop}[{\bf Birkhoff interpolation at LGL points}]\label{prop:LGL} Let $\{x_j,\omega_j\}_{j=0}^N$  be the LGL points and weights given in \eqref{LGLweights}.
Then the Birkhoff interpolation basis polynomials
  $\{B_j\}_{j=1}^{N-1}$  in Theorem  {\rm \ref{Birkhoffbasis2}}  can be computed by
  \begin{equation}\label{Bjbas2GGL}
  B_j(x)= \big(\beta_{1j}-\beta_{0j}\big) \frac{x+1}2 +\sum_{k = 0}^{N - 2}\beta_{kj}\frac{\partial_x^{-2}P_{k}(x)}{\gamma_k},
\end{equation}
where $\gamma_k=2/(2k+1),$ $\partial_x^{-2}P_{k}(x)$ is given in \eqref{Legen2edderivative}, and
\begin{equation}\label{Bjbas2LGLcoef}
\beta_{kj}=\bigg(P_k(x_j)-\frac{1-(-1)^{N+k}}{2} P_{N-1}(x_j)-\frac{1+(-1)^{N+k}}{2} P_N(x_j)\bigg)\omega_j.
\end{equation}
Moreover, we have
  \begin{equation}\label{dBjbas2GGL}
  B_j'(x)= \frac{\beta_{1j}-\beta_{0j}} 2 +\sum_{k = 0}^{N - 2}\beta_{kj}\frac{\partial_x^{-1}P_{k}(x)}{\gamma_k},
\end{equation}
where $\partial_x^{-1}P_{k}(x)$ is given in \eqref{Legenderivative}.
\end{prop}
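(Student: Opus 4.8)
The plan is to express the Birkhoff basis polynomial $B_j$ (for $1\le j\le N-1$) via its Legendre expansion and then integrate term by term. First I would start from the defining ODE obtained in the proof of Theorem~\ref{Birkhoffbasis2}, namely $B_j''(x)=L_j(x)$ with $B_j(\pm1)=0$, where $L_j$ is the Lagrange interpolant of degree $N-2$ at the interior LGL nodes. Since $L_j\in{\mathbb P}_{N-2}$, I would write $L_j(x)=\sum_{k=0}^{N-2}\hat L_{kj}P_k(x)$ and compute the coefficients by LGL quadrature: $\hat L_{kj}=\gamma_k^{-1}\int_{-1}^1 L_j(t)P_k(t)\,dt$. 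The integrand $L_jP_k$ has degree at most $2N-4\le 2N-1$, so the LGL quadrature \eqref{quadexac} is exact, giving $\hat L_{kj}=\gamma_k^{-1}\sum_{i=0}^N L_j(x_i)P_k(x_i)\omega_i$. Because $L_j(x_i)=\delta_{ij}$ at the \emph{interior} nodes but $L_j$ need not vanish at $x_0=-1$ and $x_N=1$, this sum is not simply $\gamma_k^{-1}P_k(x_j)\omega_j$; the boundary terms $L_j(-1)\omega_0P_k(-1)+L_j(1)\omega_N P_k(1)$ must be accounted for, and evaluating $L_j(\pm1)$ is where the parity expression $\tfrac{1\mp(-1)^{N+k}}{2}$ type corrections will enter.

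The next step is to pin down $L_j(\pm1)$. Using the representation \eqref{ljxexp2} with $Q_N(x)=\gamma_N\prod_{i=1}^{N-1}(x-x_i)$ and recalling that the interior LGL nodes are the zeros of $P_N'$, one has $Q_N\propto P_N'$; evaluating $L_j(\pm1)=Q_N(\pm1)/((\pm1-x_j)Q_N'(\pm1))$ and using the endpoint values $P_N'(\pm1)=\tfrac12(\pm1)^{N-1}N(N+1)$ from \eqref{legpm1} together with $P_N''(\pm1)$ (obtainable from the Legendre equation) yields closed forms for $L_j(\pm1)$ in terms of $x_j$, $P_N(x_j)$ or $P_{N-1}(x_j)$, and parity factors. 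Feeding these into the quadrature sum and simplifying should produce exactly $\hat L_{kj}=\gamma_k^{-1}\beta_{kj}$ with $\beta_{kj}$ as in \eqref{Bjbas2LGLcoef}: the $P_{N-1}(x_j)$ and $P_N(x_j)$ terms in \eqref{Bjbas2LGLcoef} are precisely the boundary corrections, split by the parity of $N+k$, and the overall factor $\omega_j$ survives from the $i=j$ interior contribution. I would also use here the Christoffel--Darboux-type identity or the relation $P_N(x_j)=\text{const}\cdot$ (value related to $\omega_j$) implicit in \eqref{LGLweights} to merge terms cleanly.

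Once $L_j=\sum_{k=0}^{N-2}\gamma_k^{-1}\beta_{kj}P_k$ is established, the formula \eqref{Bjbas2GGL} follows by applying $\partial_x^{-2}$ and fixing the two constants of integration from $B_j(\pm1)=0$. Concretely, $B_j(x)=\sum_k\gamma_k^{-1}\beta_{kj}\partial_x^{-2}P_k(x)+c_1+c_2x$; since $\partial_x^{-2}P_k(-1)=0$ and $\partial_x^{-2}(P_k)'(-1)=\partial_x^{-1}P_k(-1)=0$ for $k\ge1$ by \eqref{Legenderivative}--\eqref{Legen2edderivative} (the $k=0,1$ cases handled by their explicit polynomial forms), the condition at $x=-1$ forces the affine part to vanish there, and the condition at $x=+1$ determines the remaining slope, contributing the term $(\beta_{1j}-\beta_{0j})\tfrac{x+1}{2}$ after one computes $\partial_x^{-2}P_k(1)$ — these endpoint values are nonzero only for $k=0,1$ among the explicitly listed cases (and vanish for $k\ge 2$ by Remark~\ref{gjacobi}), which is why only $\beta_{0j},\beta_{1j}$ appear. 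Finally \eqref{dBjbas2GGL} is immediate by differentiating \eqref{Bjbas2GGL} and using $\tfrac{d}{dx}\partial_x^{-2}P_k=\partial_x^{-1}P_k$. I expect the main obstacle to be the bookkeeping in the second step: correctly evaluating $L_j(\pm1)$ and the endpoint quadrature weights $\omega_0,\omega_N$, and verifying that the parity-dependent combination collapses to the stated $\beta_{kj}$ — everything else is routine integration and matching of boundary data.
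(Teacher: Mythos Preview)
Your overall strategy matches the paper's: expand $B_j''=L_j\in\mathbb P_{N-2}$ in Legendre polynomials, evaluate the coefficients via LGL quadrature (exact since the integrand has degree $\le 2N-4$), then integrate twice and fix the affine part from $B_j(\pm1)=0$. The integration and boundary-matching steps you sketch (only $k=0,1$ contribute at $x=1$ by Remark~\ref{gjacobi}, producing the $(\beta_{1j}-\beta_{0j})(x+1)/2$ term) are exactly what the paper does, and \eqref{dBjbas2GGL} indeed follows by differentiation.

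Where you diverge from the paper is in obtaining the boundary values $L_j(\pm1)$. You propose to compute them directly from the Lagrange formula; note a slip here: the denominator is $Q_N'(x_j)$, not $Q_N'(\pm1)$, so with $Q_N\propto P_N'$ you need $P_N''(x_j)$ (from the Legendre ODE at the zeros of $P_N'$) rather than $P_N''(\pm1)$. Your route then requires the identity $P_{N-1}(x_j)=x_jP_N(x_j)$ at interior LGL nodes (a consequence of $(1-x^2)P_N'=N(P_{N-1}-xP_N)$ and $P_N'(x_j)=0$) to recast the result in terms of $P_{N-1}(x_j),P_N(x_j)$ and reach \eqref{Bjbas2LGLcoef}. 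The paper instead observes that the quadrature identity
\[
\beta_{kj}=\big((-1)^kL_j(-1)+L_j(1)\big)\omega_0+P_k(x_j)\omega_j
\]
remains valid for $k=N-1,N$, where the left side vanishes by orthogonality of $P_{N-1},P_N$ to $\mathbb P_{N-2}$; this yields a $2\times2$ linear system for $L_j(\pm1)$ whose solution is already in the form needed for \eqref{Bjbas2LGLcoef}. The paper's device is slicker and bypasses the auxiliary Legendre identities, but your direct computation is equally valid once the slip above is corrected.
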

\begin{proof}  Since $B_j''\in {\mathbb P_{N-2}},$ we expand it in terms of Legendre polynomials:
\begin{equation}\label{casecons}
B_j''(x)= \sum_{k = 0}^{N - 2}\beta_{kj}\frac{P_k(x)}{\gamma_k}\;\; {\rm where}\;\; \beta_{kj}=\int_{-1}^1 B_j''(x)P_k(x) dx.
\end{equation}
Using \eqref{quadexac}, \eqref{legpm1} and \eqref{H0basis2},  leads to
\begin{equation}\label{pftmp1}
{\beta_{kj}}=\int_{-1}^1 B_j''(x)P_k(x)dx =\big((-1)^kB_j''(-1) +B_j''(1)\big)\omega_0+P_k(x_j)\omega_j, \;\;  1\le j\le N-1.
\end{equation}
Notice that the last identity of \eqref{pftmp1} is valid for all $k\le N+1.$
Taking $k=N-1, N,$ we obtain from \eqref{Legorth} that the resulted integrals vanish, so we have the linear system of $B_j''(\pm 1)$:
\begin{equation*}
\begin{split}
& \big((-1)^{N-1}B_j''(-1) +B_j''(1)\big)\omega_0+P_{N-1}(x_j)\omega_j = 0,\\
& \big((-1)^{N}B_j''(-1) +B_j''(1)\big)\omega_0+ P_{N}(x_j)\omega_j= 0.
\end{split}
\end{equation*}
Therefore, we solve it and find that
\begin{equation}\label{Bjppvalue}
 B_j''(\pm 1) = -(\pm 1)^{N} \frac{\omega_j}{2\omega_0}\big(P_N(x_j)\pm  P_{N-1}(x_j)\big),\;\; 1\le j\le N-1.
\end{equation}
Inserting  \eqref{Bjppvalue} into \eqref{pftmp1} yields the expression for $\beta_{kj}$ in \eqref{Bjbas2LGLcoef}.
%
%

Next, it follows from \eqref{casecons} that
\begin{equation}\label{intbj}
B_j(x)= \sum_{k = 0}^{N - 2}\beta_{kj}\frac{\partial_{x}^{-2}P_k(x)}{\gamma_k}+C_1+C_2(x+1),
 \end{equation}
 where  $C_1$ and $C_2$ are constants to be  determined by $B_j(\pm 1)=0.$
Observe from \eqref{Legen2edderivative} that $\partial_{x}^{-2}P_k(-1)=0$ for $k\ge 0$ and
$\partial_{x}^{-2}P_k(1)=0$ for $k\ge 2.$  This implies $C_1=0$ and
$$
2C_2=-\frac{\beta_{0j}}{\gamma_0}\partial_{x}^{-2}P_0(1) -\frac{\beta_{1j}}{\gamma_1}\partial_{x}^{-2}P_1(1)=\beta_{1j}-\beta_{0j}.
$$
Thus,  \eqref{Bjbas2GGL} follows.  Finally, differentiating \eqref{Bjbas2GGL} leads to  \eqref{dBjbas2GGL}.
\end{proof}

  \begin{prop}[{\bf Birkhoff interpolation at CGL points}] \label{ChebyGL}
  The Birkhoff interpolation basis polynomials
  $\big\{B_j\big\}_{j=1}^{N-1}$ in Theorem {\rm \ref{Birkhoffbasis2}} at  CGL points
  $\big\{x_j=-\cos(jh)\big\}_{j=0}^N $  with $h=\pi/N,$ can be computed by
  \begin{equation}\label{Bjbass2Cheby}
 B_j(x) = \sum_{k = 0}^{N - 2}\beta_{kj}\Big\{ \partial^{-2}_xT_{k}(x)-\frac{1+x} 2 \partial_x^{-2} T_k(1)\Big\},
\end{equation}
where $\partial^{-2}_xT_{k}(x)$ is given in \eqref{ddchebyintegral}, and
\begin{equation}\label{Bjbas2Chebycoef}
\beta_{kj} =\frac{2}{c_kN}\bigg\{T_k(x_j)-\frac{1-(-1)^{N+k}}{2}T_{N-1}(x_j)-\frac{1+(-1)^{N+k}}{2}T_N(x_j) \bigg\}.
\end{equation}
Moreover, we have
  \begin{equation}\label{BjbassdCheby}
   B_j'(x) = \sum_{k = 0}^{N -2}\beta_{kj}\Big\{\partial^{-1}_xT_{k}(x)-\frac{\partial_x^{-2} T_k(1)}2\Big\},
\end{equation}
where $\partial^{-1}_xT_{k}(x)$ is computed by \eqref{chebyintegral}. Here, $c_0=2$ and $c_k=1$ for $k\ge 1$ as in
\eqref{Legorth}.
\end{prop}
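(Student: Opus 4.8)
The plan is to follow the template of the proof of Proposition~\ref{prop:LGL}, swapping Legendre data for their Chebyshev counterparts. Since $B_j\in\mathbb P_N$ we have $B_j''\in\mathbb P_{N-2}$, so I would first expand it in the Chebyshev basis, $B_j''(x)=\sum_{k=0}^{N-2}\beta_{kj}T_k(x)$, where by the orthogonality \eqref{Chebyorth} the coefficient is $\beta_{kj}=\tfrac{2}{c_k\pi}\int_{-1}^1 B_j''(x)T_k(x)(1-x^2)^{-1/2}\,dx$. Because $\deg(B_j''T_k)\le(N-2)+N=2N-2\le 2N-1$, the CGL quadrature \eqref{Chebquadexac} computes this integral exactly for every $k\le N$; moreover $T_k(\pm1)=(\pm1)^k$ by \eqref{chebpm1}, and the interior interpolation conditions $B_j''(x_i)=\delta_{ij}$ from \eqref{H0basis2} collapse the interior part of the quadrature sum to the single term $T_k(x_j)$. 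Using the CGL weights $\omega_0=\omega_N=\pi/(2N)$, $\omega_i=\pi/N$ from \eqref{CGLnodeweights}, this yields, for all $k\le N$,
\[
\beta_{kj}=\frac1{c_kN}\big((-1)^kB_j''(-1)+B_j''(1)\big)+\frac2{c_kN}\,T_k(x_j).
\]

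Next I would pin down the endpoint values $B_j''(\pm1)$. Taking $k=N-1$ and $k=N$, the integral defining $\beta_{kj}$ vanishes by orthogonality (as $\deg B_j''\le N-2$), so the displayed identity becomes a $2\times2$ linear system in $B_j''(-1)$ and $B_j''(1)$ whose right-hand side involves $T_{N-1}(x_j)$ and $T_N(x_j)$; its coefficient matrix is nonsingular. Solving it — and noting that at CGL points $\omega_j/(2\omega_0)=1$, in contrast to the LGL case \eqref{Bjppvalue} — gives $B_j''(\pm1)=-(\pm1)^N\big(T_N(x_j)\pm T_{N-1}(x_j)\big)$. Substituting this back into the formula for $\beta_{kj}$ above and regrouping according to the parity of $N+k$ produces exactly \eqref{Bjbas2Chebycoef}.

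Finally, to recover $B_j$ itself I would integrate $B_j''=\sum_{k=0}^{N-2}\beta_{kj}T_k$ twice with the operator $\partial_x^{-2}$ of \eqref{integral}, getting $B_j(x)=\sum_{k=0}^{N-2}\beta_{kj}\,\partial_x^{-2}T_k(x)+C_1+C_2(1+x)$. The constants are fixed by the homogeneous conditions $B_j(\pm1)=0$: since $\partial_x^{-2}T_k(-1)=0$ for every $k$ (visible from \eqref{ddchebyintegral}), one gets $C_1=0$, and then $B_j(1)=0$ forces $C_2=-\tfrac12\sum_{k=0}^{N-2}\beta_{kj}\,\partial_x^{-2}T_k(1)$, which is \eqref{Bjbass2Cheby}. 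Differentiating termwise and replacing $\partial_x^{-2}$ by $\partial_x^{-1}$ (using \eqref{chebyintegral}) while keeping the constant term $-\tfrac12\partial_x^{-2}T_k(1)$ gives \eqref{BjbassdCheby}.

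I expect the only genuine hazard to be bookkeeping: keeping the normalization $2/(c_kN)$ straight as it emerges from combining the $c_k\pi/2$ of \eqref{Chebyorth} with the CGL weights, checking that the degree budget $2N-2\le 2N-1$ really does license the quadrature identity at the crucial indices $k=N-1,N$ that close the $2\times2$ system, and confirming that the low-index terms $\partial_x^{-2}T_0,\partial_x^{-2}T_1,\partial_x^{-2}T_2$ (which look non-polynomial in \eqref{ddchebyintegral}) indeed vanish at $x=-1$ so that $C_1=0$. None of these is conceptually difficult; the logical skeleton is identical to the LGL proof.
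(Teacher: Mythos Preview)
Your proposal is correct and is exactly the approach the paper intends: the paper omits the proof of Proposition~\ref{ChebyGL} and simply states that it is ``very similar to that of Proposition~\ref{prop:LGL},'' which is precisely the template you follow. The three checkpoints you flag (the $2/(c_kN)$ normalization, the quadrature degree budget at $k=N-1,N$, and the vanishing of $\partial_x^{-2}T_k(-1)$ for all $k\ge0$) are all valid, so the argument goes through without any surprises.
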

Here, we omit  the proof, since it is very similar to that of  Proposition  \ref{prop:LGL}.

\begin{rem}\label{stableformula} Like  \eqref{relationPddP}-\eqref{coeffaa}, the formulas for evaluating
 integrated Legendre and/or Chebyshev  polynomials are sparse and the coefficients decay. This allows
 for stable computation of PSIM even for thousands of collocation points.   \qed
\end{rem}

 In Figure \ref{plots2}, we plot
the first six Birkhoff interpolation basis polynomials  at the GL points $\{x_j\}_{j=0}^5$ for both the Legendre (left) and Chebyshev
(right) cases.
\begin{figure}[!ht]
 \begin{center}
  \begin{minipage}{0.4\textwidth}
 \includegraphics[width=1\textwidth]{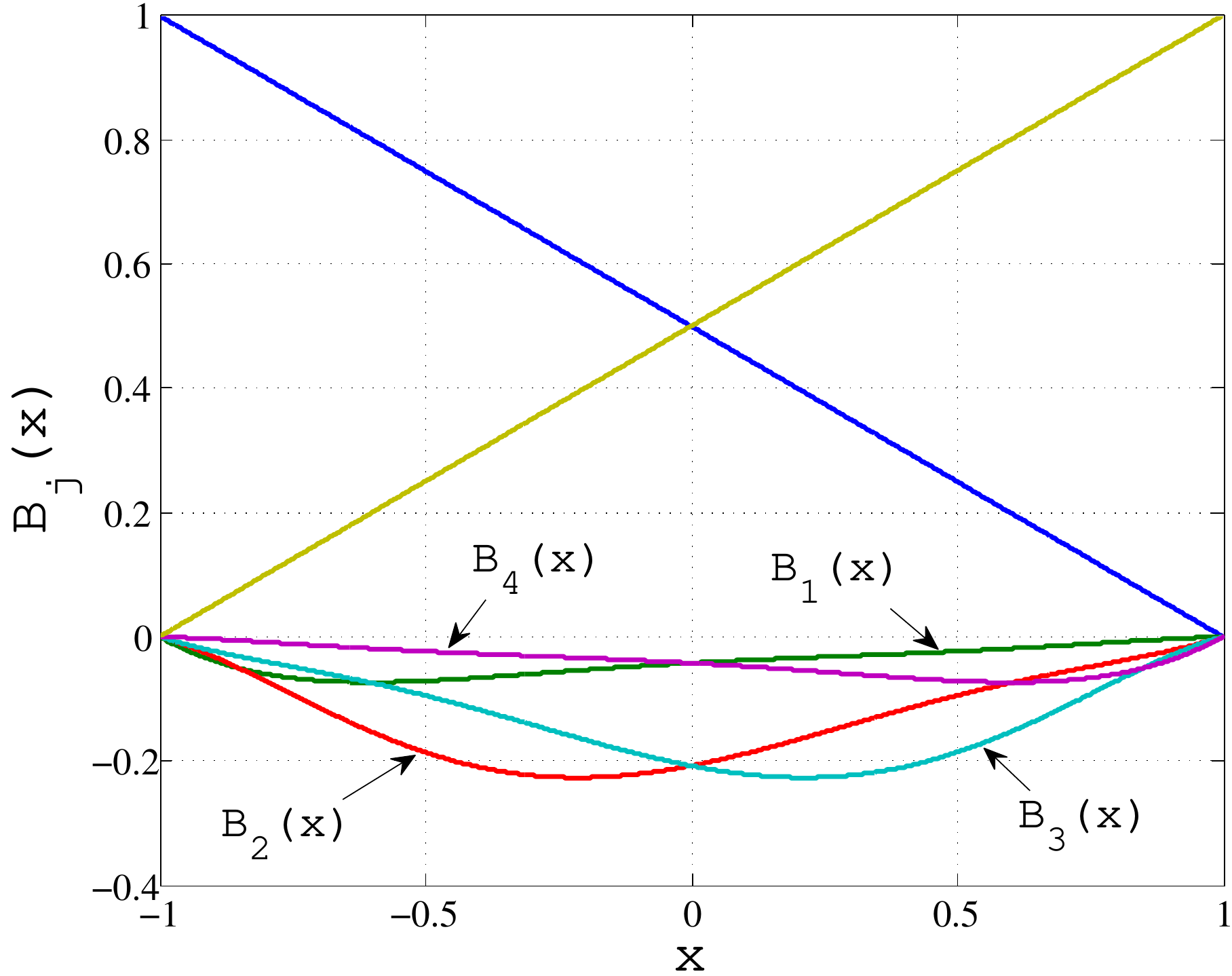}
  \end{minipage}\qquad
  \begin{minipage}{0.4\textwidth}
 \includegraphics[width=1\textwidth]{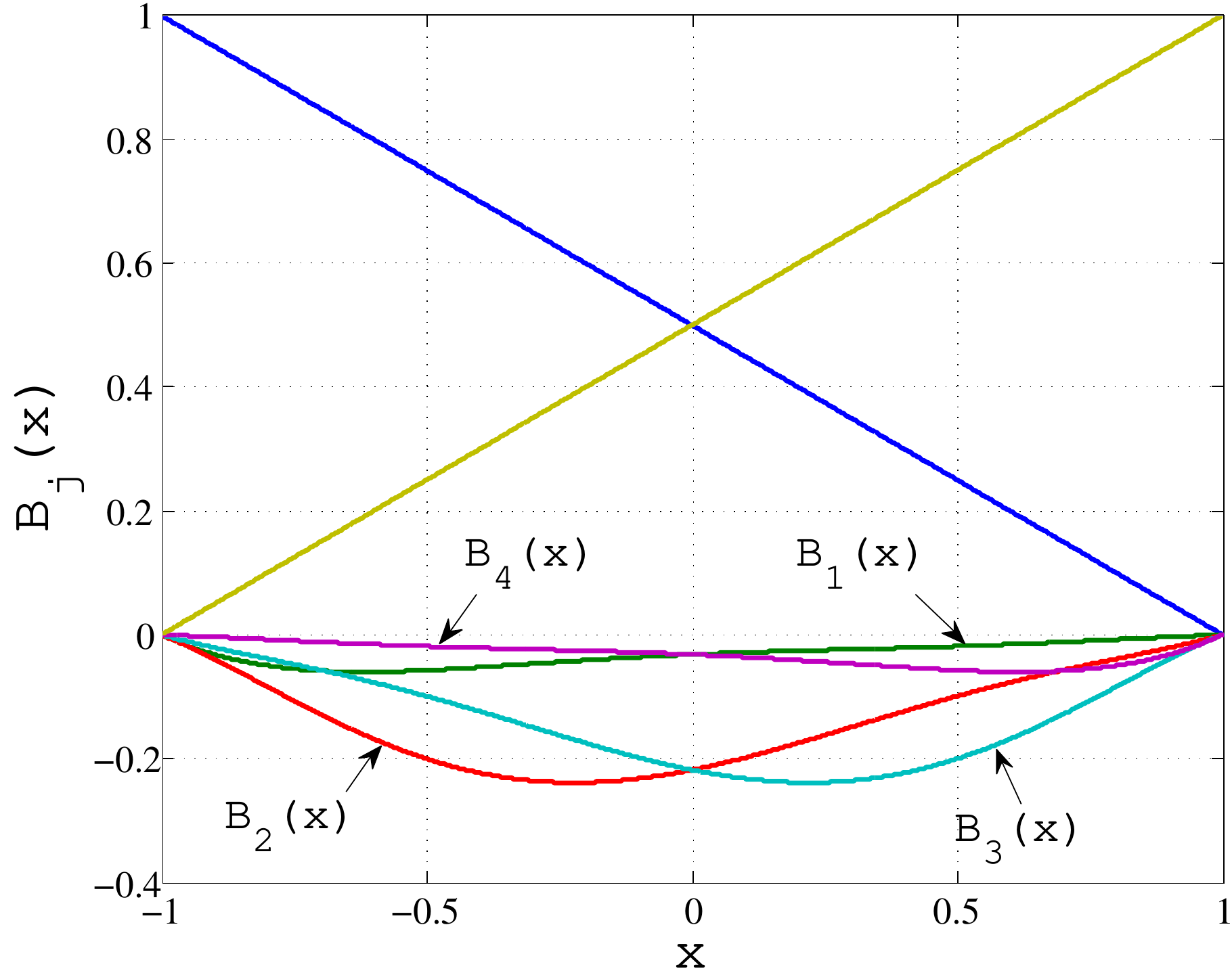}
  \end{minipage}
 \end{center}
 \caption{\small{Plots of $\{B_j\}_{j=0}^5$. Left: Legendre; right:  Chebyshev.}}
 \label{plots2}
 \end{figure}

\subsection{Collocation schemes}\label{Sect2sub2} Consider the BVP:
\begin{equation}\label{2nBVP}
\begin{split}
&  u''(x)+r(x) u'(x)+s(x) u(x)=f(x),\quad x\in I;\quad  u(\pm 1)=u_{\pm},
\end{split}
\end{equation}
where the given functions  $r,s,f\in C(I)$.   Let $\{x_j\}_{j=0}^{N}$ be the set of  Gauss-Lobatto points as in \eqref{Birkhoffb}. Then the collocation scheme for \eqref{2nBVP} is to find $u_N\in {\mathbb P}_N$ such that
\begin{equation}\label{2nBVPCol}
\begin{split}
&  u''_N(x_i)+r(x_i) u'_N(x_i)+s(x_i) u_N(x_i)=f(x_i),\quad 1\le i\le N-1;\quad  u_N(\pm 1)=u_{\pm}.
\end{split}
\end{equation}
As the Birkhoff interpolation polynomial of $u_N$ is itself, we have from \eqref{birkhoffint2} that
\begin{equation}\label{newsystem0}
u_N(x)=u_- B_0(x)+ u_+ B_N(x)+\sum_{j=1}^{N-1}u_N''(x_j) B_j(x).
\end{equation}
Then the matrix form of \eqref{2nBVPCol} reads
\begin{equation}\label{2nBVPColmat}
\big(\bs I_{N-1} +\bs \Lambda_r \bs B_{\rm in}^{(1)}+\bs \Lambda_s \bs B_{\rm in}\big)\bs v=\bs f-u_-\bs v_--u_+\bs v_+,
\end{equation}
where
\begin{align*}
& \bs \Lambda_r={\rm diag}\big(r(x_1),\cdots,r(x_{N-1})\big), \;\; \bs \Lambda_s={\rm diag}\big(s(x_1),\cdots,s(x_{N-1})\big),\\
& \bs v=\big(u_N''(x_1),\cdots,u_N''(x_{N-1})\big)^t,\quad \bs f=\big(f(x_1),\cdots, f(x_{N-1})\big)^t,\\
& \bs v_-=\Big(-\frac {r(x_1)} 2+s(x_1) \frac{1-x_1} 2,\cdots,  -\frac {r(x_{N-1})} 2+s(x_{N-1}) \frac{1-x_{N-1}} 2 \Big)^t,\\
& \bs v_+=\Big(\frac {r(x_1)} 2+s(x_1) \frac{1+x_1} 2,\cdots,  \frac {r(x_{N-1})} 2+s(x_{N-1}) \frac{1+x_{N-1}} 2 \Big)^t.
\end{align*}
It is seen that under the new basis  $\{B_j\},$  the  matrix of the highest derivative is identity, and it also allows for exact imposition of  boundary conditions.

 In summary, we take the following steps to solve \eqref{2nBVPCol}:
\begin{itemize}
\item Pre-compute $\bs B$ and $\bs B^{(1)}$ via the formulas in Propositions \ref{prop:LGL}-\ref{ChebyGL};
\item Find $\bs v$ by solving the system \eqref{2nBVPColmat};
\item Recover $\bs u=(u_N(x_1),\cdots,u_N(x_{N-1}))^t$ from   \eqref{newsystem0}:
\begin{equation}\label{urecovery}
\bs u= \bs B_{\rm in} \bs v+u_- \bs b_0+u_+\bs b_N,
\end{equation}
where $\bs b_j=\big(B_j (x_1),\cdots, B_j(x_{N-1})\big)^t$ for $ j=0,N.$
\end{itemize}

For comparison, we look at  the usual collocation scheme \eqref{2nBVPCol} under the Lagrange basis.
Write 
$$
u_N(x)=u_- l_0(x)+u_+ l_N(x)+\sum_{j=1}^{N-1} u_N(x_j) l_j(x),
$$
and insert it into  \eqref{2nBVPCol},  leading to  
\begin{equation}\label{ususcoll}
\big(\bs D_{\rm in}^{(2)}+\bs \Lambda_r \bs D^{(1)}_{\rm in} +\bs \Lambda_s  \big)\bs u=\bs f-\bs u_{_B},
\end{equation}
where  $\bs f$ is the same as in \eqref{2nBVPColmat},
$\bs u$ is the vector of unknowns $\{u_N(x_i)\}_{i=1}^{N-1},$ and  $\bs u_{_B}$ is the vector of
$\big\{u_-(d_{i0}^{(2)}+r(x_i) d_{i0}^{(1)})+u_+(d_{iN}^{(2)}+r(x_i) d_{iN}^{(1)})\big\}_{i=1}^{N-1}.$
It is known that the condition number of the coefficient matrix in \eqref{ususcoll} grows like $O(N^4).$

Thanks to the property:  $\bs B_{\rm in} \bs D_{\rm in}^{(2)}={\bs I}_{N-1}$ (see Theorem \ref{BkBkthm}),  the matrix $\bs B_{\rm in}$ can be used to precondition the ill-conditioned system \eqref{ususcoll}, leading to
\begin{equation}\label{ususcoll3}
\big(\bs I_{N-1}+\bs B_{\rm in}\bs \Lambda_r \bs D^{(1)}_{\rm in} +\bs B_{\rm in} \bs \Lambda_s  \big)\bs u=\bs B_{\rm in}\big(\bs f-\bs u_{_B}\big).
\end{equation}
\begin{rem}\label{optcond}  Different from  \cite{Hesthaven98,Elbarbary06}, we work with the system
 involving $\bs D^{(2)}_{\rm in}$ (i.e., unknowns at interior points), rather than  $\bs D^{(2)}.$ Moreover,
 the boundary conditions are imposed exactly (see Subsection \ref{sect:mixedBC} for general mixed boundary conditions), rather than using the penalty method \cite{Hesthaven98} and auxiliary equations \cite{Elbarbary06}. Consequently, our approach leads to optimal IPs and well-conditioned preconditioned systems.  \qed
\end{rem}

We now make a comparison of condition numbers between  the above linear systems and
 IP  in \cite{Elbarbary06}. Consider the same example as in \cite[Section 7]{Elbarbary06}:
\begin{equation}\label{prob2v}
    u''(x) - xu'(x) - u(x) = 0,\quad x\in I; \quad u(\pm 1) = 1,
\end{equation}
with the exact solution $u(x) = e^{(x^2 - 1)/2}.$
In Table \ref{GLtab2}, we tabulate the condition numbers (``Cond.$\#$") and maximum pointwise errors between the numerical and exact solutions obtained from
the Lagrange collocation (LCOL) scheme \eqref{ususcoll}, the Birkhoff collocation (BCOL) scheme
\eqref{2nBVPColmat} and the preconditioned LCOL (P-LCOL) scheme (\ref{ususcoll3}), respectively.
We also compare with \cite[Tables~2--3]{Elbarbary06}.  Observe that the condition numbers of the new approaches are independent of $N,$ and
do not induce round-off errors.

{\small
\begin{table}[!th]
\caption{\small Comparison of results with \cite[Tables~2--3]{Elbarbary06}}
\vspace*{-6pt}
\begin{tabular}{|c|c|c|c|c|c|c|c|c|}
    \hline
    \multirow{3}{*}{$N$} & \multicolumn{2}{|c|}{LCOL (\ref{ususcoll})} & \multicolumn{2}{|c|}{Results from \cite{Elbarbary06}}
     & \multicolumn{2}{|c|}{BCOL (\ref{2nBVPColmat})} & \multicolumn{2}{|c|}{P-LCOL (\ref{ususcoll3})}\\
    \cline{2-9}
    & Cond.$\#$ & Error & Cond.$\#$ & Error & Cond.$\#$ & Error & Cond.$\#$ & Error\\
        \hline
             \multicolumn{9}{|c|}{Legendre}\\
    \hline
64 & 1.51e+05 & 1.65e-13 & 37.2 & 9.99e-16 & 1.90 & 5.55e-16 & 1.32 & 1.22e-15\\
    \hline
128 & 2.37e+06 & 5.46e-13 & 75.5 & 1.33e-15 & 1.92 & 6.66e-16 & 1.32 & 1.44e-15\\
    \hline
256 & 3.76e+07 & 1.40e-12 & 146 & 2.55e-15 & 1.93 & 1.11e-15 & 1.32 & 2.00e-15\\
    \hline
512 & 5.99e+08 & 1.96e-11 & 292 & 3.11e-15 & 1.93 & 1.89e-15 & 1.32 & 3.11e-15\\
    \hline
1024 & 7.21e+09 & 3.21e-11 & 582 & 6.81e-15 & 1.94 & 3.22e-15 & 1.32 & 5.77e-15\\
    \hline
        \multicolumn{9}{|c|}{Chebyshev}\\
    \hline
64 & 2.74e+05 & 7.14e-14 & 37.3 & 9.99e-16 & 1.91 & 7.77e-16 & 1.32 & 9.99e-16\\
    \hline
128 & 4.39e+06 & 5.74e-13 & 73.7 & 1.78e-15 & 1.93 & 7.77e-16 & 1.32 & 1.22e-15\\
    \hline
256 & 7.02e+07 & 2.22e-12 & 146 & 2.99e-15 & 1.93 & 1.22e-15 & 1.32 & 1.89e-15\\
    \hline
512 & 1.12e+09 & 9.52e-12 & 292 & 3.89e-15 & 1.94 & 1.67e-15 & 1.32 & 2.66e-15\\
    \hline
1024 & 1.80e+10 & 4.61e-11 & 583 & 7.44e-15 & 1.94 & 3.77e-15 & 1.32 & 4.77e-15\\
    \hline
\end{tabular}\label{GLtab2}
\end{table}
}

As a second example, we consider
\begin{equation}\label{prob3r}
  u''(x) - u(x) =f(x)=
  \begin{cases}
  	\frac{x^2}{2} + x - 1,\;\; & -1 < x < 0,\\
  	x - 1, & 0 \le x < 1,
  \end{cases}
\end{equation}
with the exact solution
\begin{equation*}\label{prob3rsoln}
	u(x) = \begin{cases}
		\cosh(x + 1) - \frac{x^2}{2} - x,\;\; &  -1 \le x < 0,\\
		\cosh(x + 1) - \cosh(x) - x + 1,\;\; &  0 \le x \le 1.
	\end{cases}
\end{equation*}
Note that $f\in C^1(\bar I)$  and  $u\in C^3(\bar I).$
In Figure~\ref{accuracygraphs}, we graph the maximum point-wise  errors for both BCOL and LCOL.
We see that the BCOL is free of round-off error even for thousands of points. Note that the slope of the line is approximately $-3$ as expected.
 \begin{figure}[!ht]
        \begin{center}
            \begin{minipage}{0.4\textwidth}
                \includegraphics[width=1.0\textwidth]{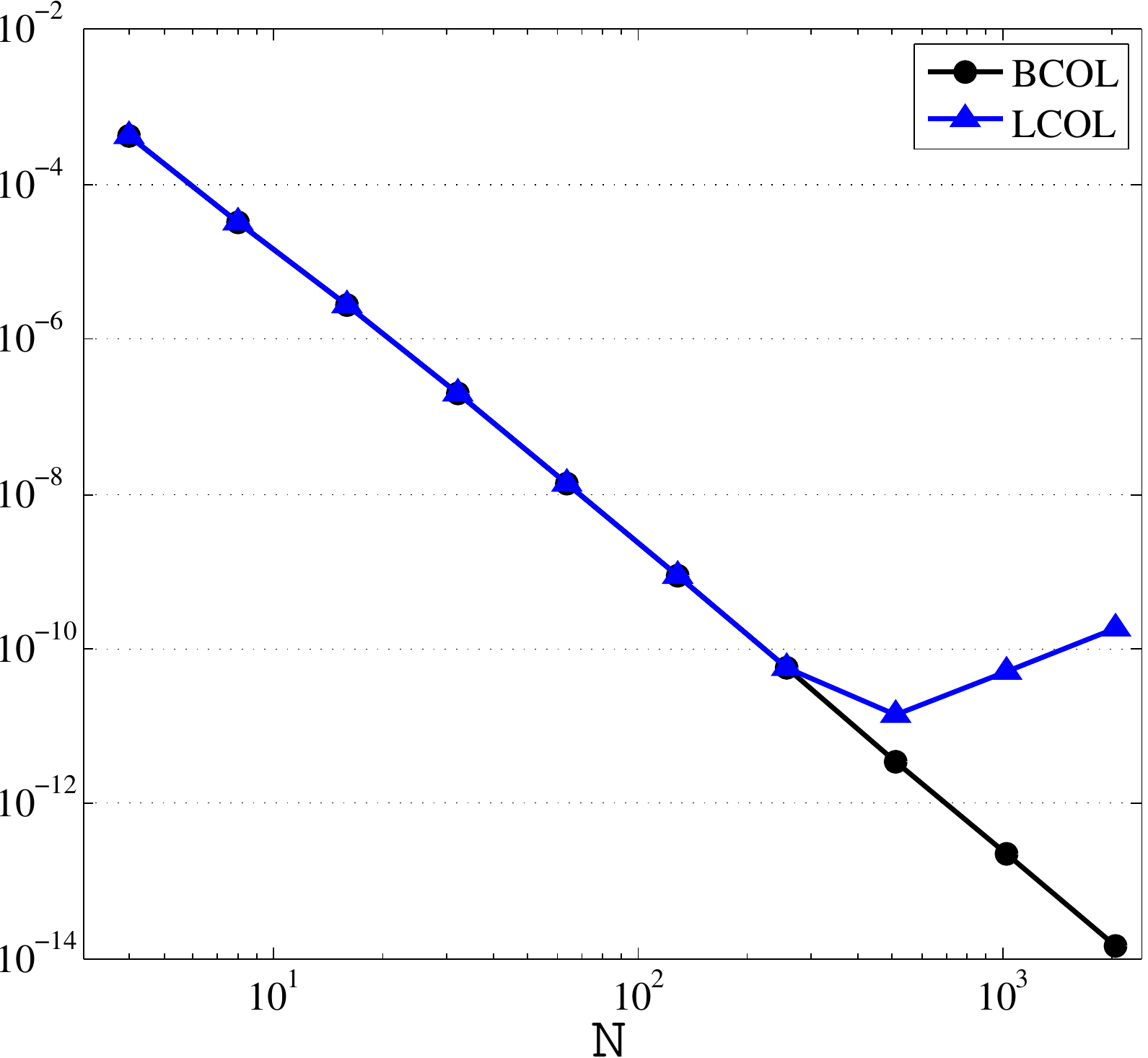}
            \end{minipage}\qquad
            \begin{minipage}{0.4\textwidth}
                \includegraphics[width=1.0\textwidth]{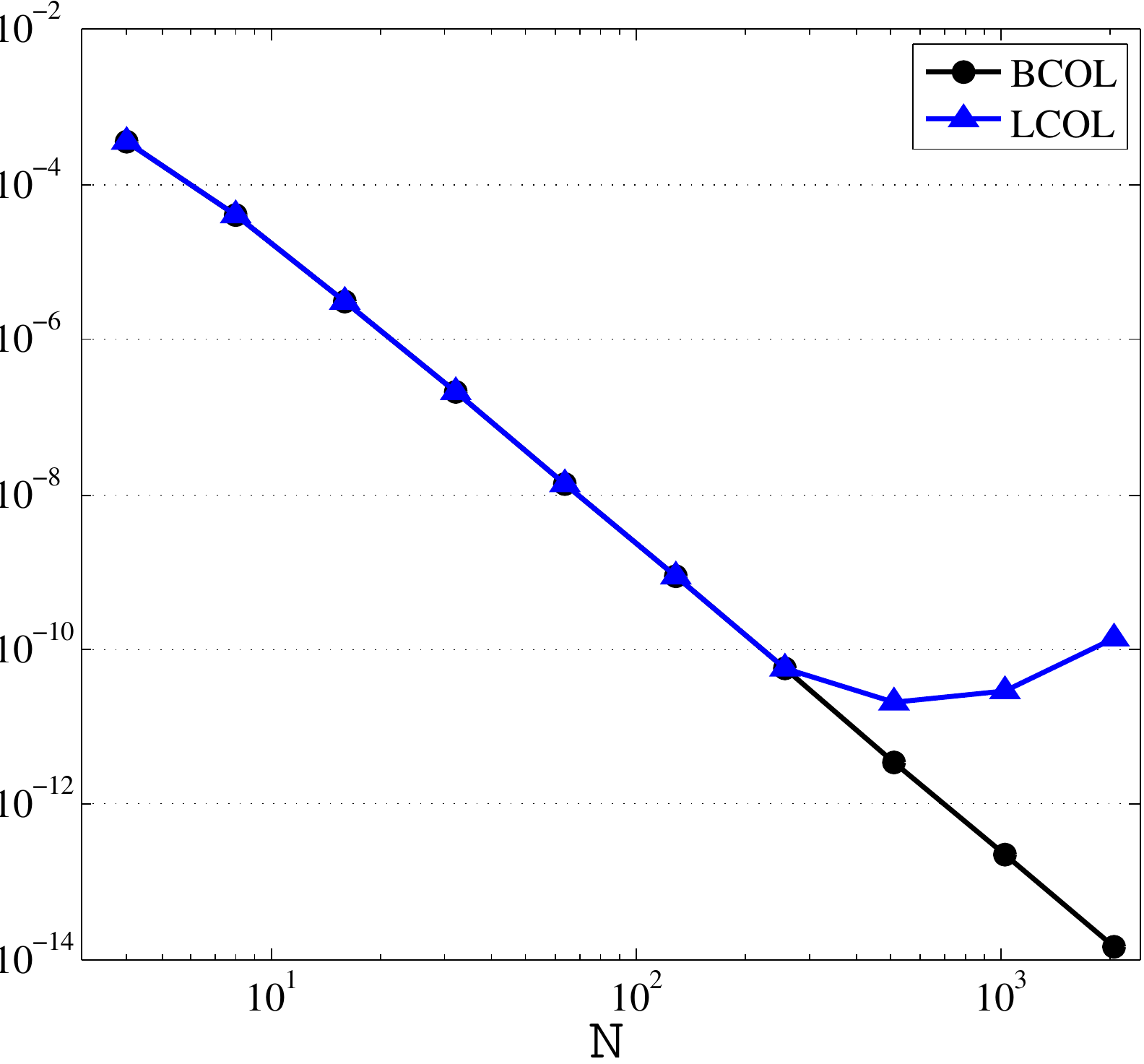}
            \end{minipage}
        \end{center}
        \caption{\small{Comparison of maximum pointwise errors. Left: LGL; right: CGL.}}
        \label{accuracygraphs}
    \end{figure}


Below, we have some insights into eigenvalues of the new collocation system  for
 the operator: $\frac{d^2}{{dx^2}}-k$ (i.e.,  Helmholtz  (resp. modified Helmholtz) operator for $k<0$ (resp. $k>0$)) with Dirichlet boundary conditions.
\begin{prop}\label{addprof}  In the LGL case,  the eigenvalues of $\bs I_{N-1}-k \bs B_{\rm in}$ are all real and distinct, which are uniformly bounded. More precisely, for any eigenvalue $\lambda$ of $\bs I_{N-1}-k \bs B_{\rm in},$ we have
\begin{equation}\label{eignbound}
1+c_N\frac{4k\pi^2}{N^4}<\lambda< 1+\frac{4k}{\pi^2},\;\; {\rm if}\;\; k\ge 0; \quad  1+\frac{4k}{\pi^2}<\lambda< 1+c_N\frac{4k\pi^2}{N^4},\;\; {\rm if}\;\; k<0,
\end{equation}
where  $c_N\approx 1$ for large $N.$
\end{prop}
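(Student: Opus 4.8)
The plan is to analyze $\bs B_{\rm in}$ as (essentially) the matrix of a symmetric, positive-definite integral operator, so that the eigenvalues of $\bs I_{N-1}-k\bs B_{\rm in}$ inherit reality, simplicity, and two-sided bounds. First I would identify the bilinear form behind $\bs B_{\rm in}$. By Theorem~\ref{BkBkthm} we have $\bs B_{\rm in}=(\bs D^{(2)}_{\rm in})^{-1}$, and $\bs D^{(2)}_{\rm in}$ is the collocation discretization of $u\mapsto u''$ with homogeneous Dirichlet conditions at $x_0,x_N$. Equivalently, for $u,v\in \mathbb{P}_N$ vanishing at $\pm 1$, the Birkhoff construction \eqref{interbasis2} shows $(\bs B_{\rm in}\bs w)_i = B(x_i)$ where $B''=\sum_j w_j L_j$ and $B(\pm1)=0$, i.e. $B=\partial_x^{-2}$-type double integral of the interior-node interpolant of $\bs w$. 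The key is that in the LGL case the interior nodes and weights $\{x_j,\omega_j\}_{j=1}^{N-1}$ furnish a Gauss-type quadrature making $\bs B_{\rm in}$ similar to a symmetric matrix: writing $\bs W={\rm diag}(\omega_1,\dots,\omega_{N-1})$, I expect $\bs W^{1/2}\bs B_{\rm in}\bs W^{-1/2}$ (or $\bs W^{1/2}\bs B_{\rm in}\bs W^{1/2}$, depending on normalization) to be symmetric, because $-\partial_x^2$ with Dirichlet data is self-adjoint in $L^2$ and LGL quadrature is exact on the relevant polynomial products. That gives real, and—since the operator is a genuine second antiderivative, hence has a strictly monotone Green's function—distinct eigenvalues; reality and simplicity of the $\lambda$ follow since $\bs I_{N-1}-k\bs B_{\rm in}$ is a real affine function of a symmetrizable matrix.

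Next I would extract the two-sided bound \eqref{eignbound}. Since eigenvalues of $\bs I_{N-1}-k\bs B_{\rm in}$ are $1-k\mu$ over eigenvalues $\mu$ of $\bs B_{\rm in}$, it suffices to bound the spectrum of $\bs B_{\rm in}=(\bs D^{(2)}_{\rm in})^{-1}$ from above and below, i.e. to bound the eigenvalues of $\bs D^{(2)}_{\rm in}$ away from $0$ and above. The natural route is a Rayleigh-quotient estimate for the discrete operator: in the symmetrized variables, $\mu_{\max}$ and $\mu_{\min}$ of $\bs B_{\rm in}$ equal $\max/\min$ of $\langle \bs B_{\rm in}\bs w,\bs w\rangle_{\bs W}/\langle\bs w,\bs w\rangle_{\bs W}$, which by the Green's-function representation equals $\int_{-1}^1 (\partial_x^{-1}\!I_{N-1}w)^2\,dx$-type quantities over $\int w^2$. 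Plugging in the constant function $w\equiv 1$ (so its double antiderivative with Dirichlet data is $\tfrac12(1-x^2)$-like) gives the lower eigenvalue $\approx 8/\pi^2\cdot$(weight correction), which produces the $1+4k/\pi^2$ endpoint; the opposite endpoint, of size $O(N^{-4})$, comes from testing against the most oscillatory interior mode—the discrete analogue of $\sin$-like eigenfunctions of $-\partial_x^2$, whose smallest reciprocal eigenvalue scales like the square of the top Dirichlet Legendre eigenvalue, $\sim N^4$. The constants $4\pi^2$ and $c_N\approx1$ should drop out of the explicit LGL weight formula \eqref{LGLweights} together with $P_N^2$ values and the known asymptotics of the extreme LGL node spacing near $\pm1$ ($1-x_{N-1}\sim \pi^2/N^2$ up to lower order).

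The main obstacle, and the part requiring the most care, is the sharpness of the $O(N^{-4})$ endpoint with the explicit constant: one must pin down the largest eigenvalue of $\bs D^{(2)}_{\rm in}$ (equivalently the norm $\|\bs D^{(2)}_{\rm in}\|$, or really its extreme eigenvalue with sign) to leading order, not just up to a constant. This is the delicate eigen-analysis of the interior LGL second-derivative matrix; the cleanest path is to relate it to the full $\bs D^{(2)}$ whose spectral behavior is documented (see \cite[Section 4.3]{CHQZ06}), use interlacing from the rank-two modification $\widetilde{\bs D}^{(2)}$ in \eqref{newdiffpr}, and then invoke the known fact that the extreme eigenvalue of the Dirichlet LGL Laplacian-type matrix behaves like $c_N\,N^4/(4\pi^2)$ with $c_N\to 1$. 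The upper bound $1+4k/\pi^2$ is comparatively easy—it follows from a direct Rayleigh-quotient lower bound for $\bs D^{(2)}_{\rm in}$ using that the Poincaré constant on $(-1,1)$ with Dirichlet data is $(\pi/2)^2$, made discrete via the exactness \eqref{quadexac}—so I would write that part first, then handle the harder spectral-radius estimate, and finally assemble \eqref{eignbound} by the map $\mu\mapsto 1-k\mu$, splitting into the cases $k\ge0$ and $k<0$ to track the inequality directions.
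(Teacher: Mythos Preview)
Your core reduction---observing that the eigenvalues of $\bs I_{N-1}-k\bs B_{\rm in}$ are $1-k\mu$ with $\mu$ ranging over the spectrum of $\bs B_{\rm in}=(\bs D_{\rm in}^{(2)})^{-1}$, hence $1-k/\lambda_{N,l}$ over the spectrum $\{\lambda_{N,l}\}$ of $\bs D_{\rm in}^{(2)}$---is exactly what the paper does. Where you diverge is in how you justify the spectral input: the paper does no symmetrization, no Rayleigh quotients, and no Green's-function analysis. It simply cites \cite[Theorem~7]{Welfert1994} for the fact that all eigenvalues of $\bs D_{\rm in}^{(2)}$ are real, distinct and negative, and then cites \cite{Welfert1994,Boulmezaoud2007} for the two extreme values $-\lambda_{N,1}>\pi^2/4$ and $-\lambda_{N,N-1}=c_N N^4/(4\pi^2)$. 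The bounds in \eqref{eignbound} then fall out from $\lambda\mapsto 1-k/\lambda$ and splitting on the sign of $k$.

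Your alternative route is more self-contained but has soft spots. The symmetrization step is in fact correct in the LGL case ($\bs W\bs D_{\rm in}^{(2)}$ is symmetric by the exactness \eqref{quadexac} and discrete integration by parts), so reality follows once you fix the normalization you are unsure about. However, your distinctness argument via a ``strictly monotone Green's function'' is not a proof: symmetry alone does not rule out repeated eigenvalues, and monotonicity of the continuous Green's kernel does not obviously transfer to simplicity of the \emph{discrete} spectrum. Likewise, the Poincar\'e-via-quadrature argument gives only $-\lambda_{N,1}\ge \pi^2/4$, not the strict inequality, and for the $O(N^{-4})$ endpoint you end up conceding that one must invoke the known asymptotic $c_N N^4/(4\pi^2)$ anyway. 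So your scaffolding is heavier than needed, and in the one place where it is genuinely load-bearing (distinctness) it does not hold; the paper bypasses all of this by direct citation.
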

\begin{proof} From  \cite[Theorem 7]{Welfert1994}, we know that all eigenvalues of $\bs D_{\rm in}^{(2)},$ denoted by
 $\{\lambda_{N,l}\}_{l=1}^{N-1},$ are real, distinct and negative, which we arrange them as $\lambda_{N,N-1}<\cdots<\lambda_{N,1}<0.$
We diagonalize $\bs D_{\rm in}^{(2)}$ and write it  as $ \bs D_{\rm in}^{(2)}=\bs Q \bs \Lambda_\lambda \bs Q^{-1},$ where
$\bs Q$ is formed by the eigenvectors and $\bs \Lambda_\lambda$ is the diagonal matrix of all eigenvalues. Since
$\bs B_{\rm in}=\big(\bs D_{\rm in}^{(2)}\big)^{-1}$ (cf.  Theorem
\ref{BkBkthm}), we have $\bs I_{N-1}-k \bs B_{\rm in}=\bs Q \bs (\bs I_{N-1}- k\bs \Lambda_\lambda^{-1}\big)\bs Q^{-1}.$ Therefore, the eigenvalues of $\bs I_{N-1}-k \bs B_{\rm in}$ are $\{1-k\lambda_{N,l}^{-1}\}_{l=1}^{N-1},$ which are real and distinct.
Then the bounds in \eqref{eignbound} can be obtained from the properties:
$ -\lambda_{N,1} > \pi^2/4$ (see
   \cite[Last line on Page 286]{Welfert1994}  and  \cite[Theorem 2.1]{Boulmezaoud2007}), and
   $-\lambda_{N,N-1}=c_N N^4/(4\pi^2)$ (see \cite[Proposition 9]{Welfert1994}).  
\end{proof}

\begin{rem}\label{Chebycond} We can obtain similar bounds for the CGL case by using the bounds for eigenvalues of $\bs D_{\rm in}^{(2)}$ in  e.g., \cite{Weideman1988} and  \cite[Section 4.3]{CHQZ06}. \qed
\end{rem}

\begin{rem}\label{condLGL}  As a consequence of \eqref{eignbound},    the condition number of $\bs I_{N-1}-k \bs B_{\rm in}$
is independent of $N.$ For example, it is uniformly bounded by $1+ {4k}/{\pi^2}$ for $k\ge 0.$  It is noteworthy that if $k=-w^2$ with $w\gg 1$ (i.e., Helmholtz equation with high wave-number), then the condition number behaves like $O(w^2)$, independent of $N$. \qed
\end{rem}

\subsection{Mixed boundary conditions}\label{sect:mixedBC}
Consider the second-order BVP \eqref{2nBVP},  equipped with mixed boundary conditions:
\begin{equation}\label{genbdy}
\begin{split}
    {\mathcal B}_-[u]:= a_-u(-1) + b_-u'(-1) = c_-,\quad
{\mathcal B}_+[u]:= a_+u(1) + b_+u'(1) = c_+,
\end{split}
\end{equation}
where $a_\pm, b_\pm$ and $c_\pm$ are given constants.  We first  assume  that
\begin{equation}\label{genbdydena}
    d := 2a_+a_- - a_+b_- + a_-b_+ \not =0,
\end{equation}
which excludes Neumann boundary conditions (i.e., $a_-=a_+=0$) to be considered later.

We associate  \eqref{genbdy}  with  the Birkhoff-type interpolation:
\begin{equation}\label{birkhoffprobmixed}
\left\{\begin{aligned}
&\text{Find $p\in {\mathbb P}_N$  such that}\\
& {\mathcal B}_-[p]=c_-,\;\; p''(x_j)=c_j,\;\; 1\le j\le N-1,\;\;  {\mathcal B}_+[p]=c_+,
\end{aligned}\right.
\end{equation}
where $\{x_j\}$ are  interior Gauss-Lobatto points, and $\{c_\pm, c_j\}$ are  given.
As before, we look for the interpolation basis polynomials, still denoted by $\{B_j\}_{j=0}^N,$ satisfying
\begin{equation}\label{genbdybasis2}
\begin{split}
    &{\mathcal B}_- [B_0] = 1,  \quad B_0''(x_i) = 0, \;\; 1\le i\le N-1, \quad  {\mathcal B}_+ [B_0] = 0;\\
    & {\mathcal B}_- [B_j] = 0,  \quad B_j''(x_i) = \delta_{ij},\;\; 1\le i\le N-1, \quad {\mathcal B}_+[B_j] = 0,\;\;  1\le j\le N-1;\\
    & {\mathcal B}_-  [B_N] = 0,  \quad B_N''(x_i) = 0,\;\; 1\le i\le N-1,\quad   {\mathcal B}_+ [B_N] = 1.
\end{split}
\end{equation}
Following the same lines as for the proof of Theorem  \ref{Birkhoffbasis2}, we find that if $d\not =0,$
\begin{equation}\label{genbdyvermodcc}
        B_0(x) = \frac{a_+} d (1 - x) + \frac {b_+}{d},\quad  B_N(x) = \frac{a_-} d (1 + x) - \frac {b_-}{d},
\end{equation}
and for $1\le j\le N-1,$
\begin{equation}\label{Bjformmixed}
\begin{split}
        B_j(x) & = \int^x_{-1}(x - t)L_j(t)\,{d}t - \Big(\frac{a_-} d (1 + x) - \frac {b_-}{d}\Big)
        \int_{-1}^1 \big(a_+(1-t)+b_+\big) L_j(t)\, dt,
\end{split}
\end{equation}
where $\{L_j\}$ are the Lagrange basis polynomials associated with the interior Gauss-Lobatto points as defined in
Theorem  \ref{Birkhoffbasis2}.
Thus, for any  $u\in C^2(I),$ its interpolation polynomial is given by
\begin{equation}\label{interpBirkh}
p(x)=\big({\mathcal B}_-[u]\big) B_0(x)+\sum_{j=1}^{N-1} u''(x_j) B_j(x)+\big({\mathcal B}_+[u]\big) B_N(x).
\end{equation}
We can find  formulas for computing $\{B_j\}_{j=1}^{N-1}$ on LGL and CGL points by using the same approach as in Proposition \ref{prop:LGL}.

 Armed with the new basis, we can impose mixed boundary conditions exactly, and the
linear  system resulted from the usual collocation scheme  is well-conditioned.
 Here, we test the method on the second-order equation in \eqref{2nBVP} but with the mixed boundary conditions: $u(\pm 1)\pm u'(\pm 1)=u_\pm.$
In Table \ref{GLtabo}, we list the condition numbers of the usual collocation method (LCOL, where the boundary conditions are treated by the tau-method), and the Birkhoff collocation method (BCOL) for both Legendre and Chebyshev cases.  Once again,
the new approach is well-conditioned.
%
%
{\small
\begin{table}[!th]
\caption{Comparison of  condition numbers}
\vspace*{-6pt}
\begin{tabular}{|c|c|c|c|c|c|
c|c|c|}
    \hline
    \multirow{3}{*}{$N$} & \multicolumn{4}{|c|}{$r=0$ and $s=-1$} & \multicolumn{4}{|c|}{$r=s=-1$}\\
    \cline{2-9}
    & \multicolumn{2}{|c|}{Chebyshev} & \multicolumn{2}{|c|}{Legendre} & \multicolumn{2}{|c|}{Chebyshev} & \multicolumn{2}{|c|}{Legendre}\\
    \cline{2-9}
    & BCOL & LCOL & BCOL & LCOL & BCOL & LCOL & BCOL & LCOL\\
    \hline
    32 & 2.42 & 1.21e+05 & 2.45 & 6.66e+04 & 2.61 & 1.43e+05 & 2.61 & 7.87e+04\\
    \hline
    64 & 2.43 & 2.65e+06 & 2.45 & 1.41e+06 & 2.63 & 3.15e+06 & 2.63 & 1.68e+06\\
    \hline
    128 & 2.44 & 5.88e+07 & 2.45 & 3.09e+07 & 2.64 & 7.04e+07 & 2.64 & 3.70e+07\\
    \hline
    256 & 2.44 & 1.32e+09 & 2.45 & 6.88e+08 & 2.64 & 1.58e+09 & 2.64 & 8.26e+08\\
    \hline
    512 & 2.44 & 2.97e+10 & 2.44 & 1.54e+10 & 2.65 & 3.57e+10 & 2.65 & 1.86e+10\\
    \hline
    1024 & 2.44 & 6.71e+11 & 2.44 & 3.48e+11 & 2.65 & 8.08e+11 & 2.65 & 4.19e+11\\
    \hline
\end{tabular}\label{GLtabo}
\end{table}}

\subsection{Neumann boundary conditions}  Consider  the Poisson equation with Neumann boundary conditions:
\begin{equation}\label{2ndNeumann}
u''(x)=f(x),\quad  x\in I;\quad u'(\pm 1)=0,
\end{equation}
where $f$ is a continuous function such that $\int_{-1}^1 f(x)\,dx=0.$  Its solution is unique up to any additive  constant.
To ensure the uniqueness, we supply \eqref{2ndNeumann} with an additional condition: $u(-1)=u_-.$

Observe that the interpolation problem  \eqref{birkhoffprobmixed} is not well-posed if ${\mathcal B}_\pm [u]$ reduces to Neumann boundary conditions. Here, we consider the following special case of  \eqref{birkhoffprob}:
\begin{equation}\label{birkhoffprobneumann}
\begin{cases}
\text{Find } p\in {\mathbb P}_{N + 1} \text{ such that}\\
p(-1)=y_0^{0},\;\; p'(-1)=y_0^1,\;\; p''(x_j)=y_j^2,\;\; 1\le j\le N-1,\;\;  p'(1) = y_N^1,
\end{cases}
\end{equation}
where $\{x_j\}_{j=1}^{N-1}$ are interior Gauss-Lobatto points, and the data $\{y_j^m\}$ are  given.
However, this interpolation problem is only conditionally well-posed.
For example, in the LGL and CGL cases, we have to assume that $N$ is odd.

 As before, we look for  basis polynomials, still denoted by $\{B_j\}_{j=0}^{N+1},$ such that for
 $1\le i\le N-1,$
\begin{equation}\label{neumannbdybasis2}
\begin{split}
    & B_0(-1) = 0, \;\; B_0'(-1) = 1,  \;\; B_0''(x_i) = 0, \;\;   B_0'(1) = 0;\\
    & B_j(-1) = 0, \;\; B_j'(-1) = 0,  \;\; B_j''(x_i) = \delta_{ij}, \;\; B_j'(1) = 0,\;\;  1\le j\le N-1;\\
    & B_N(-1) = 0, \;\; B_N'(-1) = 0,  \;\; B_N''(x_i) = 0,\;\;  B_N'(1) = 1;\\
    & B_{N+1}(-1) = 1, \;\; B_{N+1}'(-1) = 0,  \;\; B_{N+1}''(x_i) = 0, \;\;   B_{N+1}'(1) = 0.
\end{split}
\end{equation}
Let $Q_{N}(x)=c_N\prod_{j=1}^{N-1} (x-x_j)$ with $c_N\not=0$ as defined in \eqref{ljxexp2}.
Following the proof of Theorem \ref{Birkhoffbasis2}, we find that if
$\int_{-1}^1 Q_N(t)\,dt\not=0,$ we have
\begin{equation}\label{neumannbdyvermodcc}
        B_0(x) = 1 + x - \frac{\int_{-1}^x (x - t)Q_N(t)\,dt}{\int_{-1}^1 Q_N(t)\,dt},\;\;  B_N(x) = \frac{\int_{-1}^x (x - t)Q_N(t)\,dt}{\int_{-1}^1 Q_N(t)\,dt},\;\;  B_{N + 1}(x) \equiv 1,
\end{equation}
and for $1\le j\le N-1,$
\begin{equation}\label{Bjformneumann}
        B_j(x) = \int^x_{-1}(x - t)L_j(t)\,dt - 
       \bigg(\int_{-1}^1 L_j(t)\,dt\bigg)B_N(x),\;\;\; L_j(x)=\frac{Q_N(x)}{(x-x_j)Q_{N}'(x_j)}.
\end{equation}
\begin{rem}\label{Legcheb}
In the Legendre/Chebyshev case, we have $Q_N(x)=P_N'(x)$ or $T_N'(x),$ so by \eqref{legpm1}-\eqref{chebpm1},
$$
\int_{-1}^1 Q_N(t)\,dt=\int_{-1}^1 P'_N(t)\,dt=1-(-1)^N=\int_{-1}^1 T'_N(t)\,dt,
$$
which is nonzero, if and only if $N$ is odd. \qed
\end{rem}

%
%
 We plot in Figure \ref{accuracygraphn} the maximum point-wise errors of  the usual collocation (LCOL) and Birkhoff collocation (BCOL) methods for \eqref{2ndNeumann} with the exact solution $u(x) = \cos(10 x) - \cos(10).$ Note that the condition numbers of systems obtained from BCOL are all $1.$  We see that BCOL outperforms LCOL as before.

\begin{figure}[!ht]
        \begin{center}
            \begin{minipage}{0.4\textwidth}
                \includegraphics[width=1\textwidth]{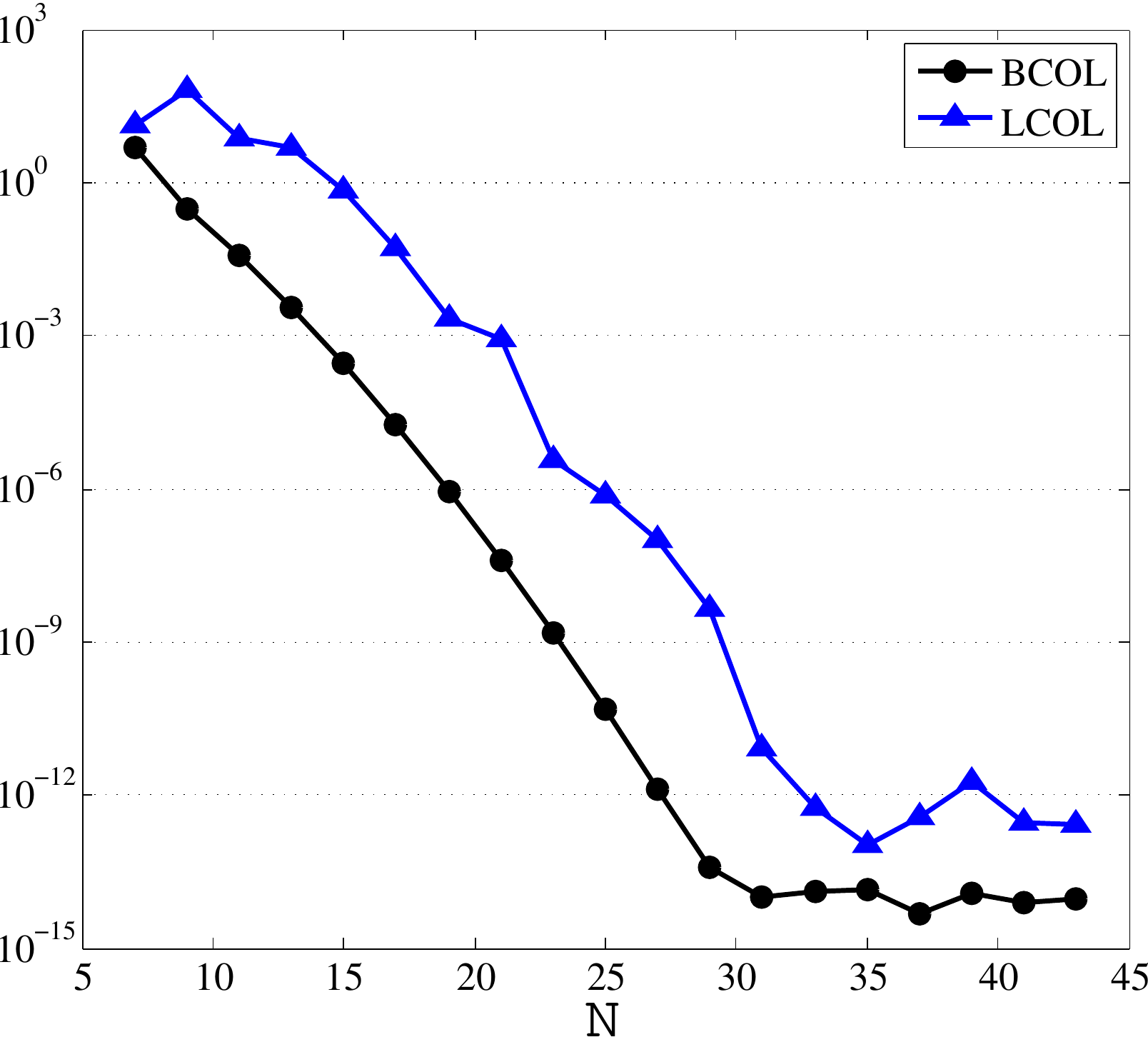}
            \end{minipage}\qquad
            \begin{minipage}{0.4\textwidth}
                \includegraphics[width=1\textwidth]{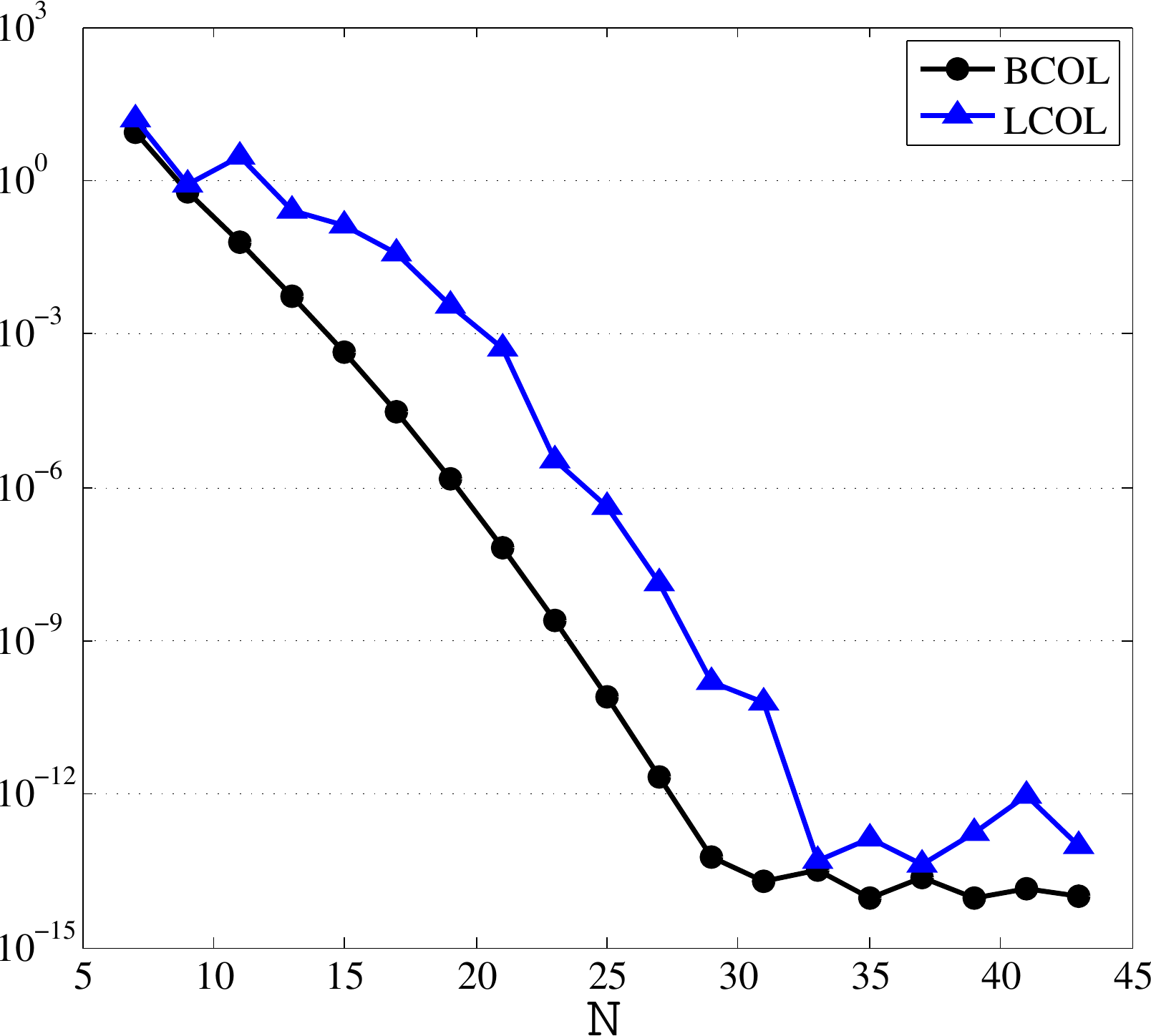}
            \end{minipage}
        \end{center}
        \caption{\small{Comparison of maximum pointwise errors. Left: LGL; right: CGL.}}
        \label{accuracygraphn}
    \end{figure}

\section{Miscellaneous extensions and discussions}\label{sect4}

In this section, we present various extensions of the Birkhoff interpolation and new collocation methods to numerical
solution of first-order initial value problems (IVPs),  higher order equations, and multi-dimensional problems.

\subsection{First-order IVPs}
To this end, let  $\{ x_j\}_{j=0}^N$ in \eqref{interpoint} be a set of Gauss-Radau  interpolation points (with $x_0=-1$ and $x_N<1$).  The counterpart of \eqref{Birkhoffb} in this context reads
\begin{equation}\label{Birkhoffa}
\left\{
\begin{aligned}
& \text{Find $p\in {\mathbb P}_N$ such that  for any  $u\in C^1(I),$}\\
& p(-1)=u(-1),\quad p'(x_j)=u'(x_j),\quad 1\le j\le N.
\end{aligned}\right.
\end{equation}
One verifies readily that $p(x)$ can be uniquely expressed by
\begin{equation}\label{birkhoffint}
p(x)=u(-1) B_0(x)+ \sum_{j=1}^N u'(x_j) B_j(x),\quad x\in [-1,1],
\end{equation}
if there exist $\{B_j\}_{j=0}^N\subseteq {\mathbb P}_N$  such that
\begin{align}
&B_0(-1)=1,\;\; B_0'(x_i)=0,\;\; 1\le i\le N;\;\; B_j(-1)=0, \;\;   B_j'(x_i)=\delta_{ij},\;\;  1\le i,j\le N. \label{H0basis}
\end{align}
Like  Theorem \ref{Birkhoffbasis2},  we can derive
\begin{equation}\label{interbasis}
B_0(x)=1;  \quad B_j(x)=\int_{-1}^x L_j (t)\, dt,\;\;\; 1\le  j\le N,
\end{equation}
where
\begin{equation}\label{ljxexp}
L_j(x)=\frac{Q_N(x)}{(x-x_j)Q_N'(x_j)},\quad Q_{N}(x)=c_N  \prod_{j=1}^N(x-x_j), \;\; c_N\not=0.
\end{equation}
%
%
%
      Let $\{l_j\}_{j=0}^N$ be the Lagrange basis polynomials associated with $\{x_j\}_{j=0}^N.$
Set  $b_{ij}:=B_{j}(x_i)$ and $d_{ij}:=l_j'(x_i).$  Define
\begin{equation}\label{Dmatrixas}
\bs B=(b_{ij})_{0\le i,j\le N},\;\; \bs B_{\rm in}=(b_{ij})_{1\le i,j\le N},\;\;  \bs D=(d_{ij})_{0\le i,j\le N},\;\;
\bs D_{\rm in}=(d_{ij})_{1\le i,j\le N}.
 \end{equation}

Like \eqref{BJrela200},   we have the following important properties.
\begin{theorem}\label{imprttrela} There hold
\begin{equation}\label{1storderinv}
\bs D_{\rm in} \bs B_{\rm in}={\bs I}_N,\quad  \widetilde  {\bs D}\bs B=\bs I_{N+1},
\end{equation}
where $\widetilde  {\bs D}$ is obtained by replacing the first row of
 $\bs D$ by $\bs e_1=(1,0,\cdots,0).$
\end{theorem}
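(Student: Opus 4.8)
The plan is to mimic the proof of Theorem \ref{BkBkthm} almost verbatim, since the first-order situation at Gauss--Radau points is structurally identical to the second-order situation at Gauss--Lobatto points. First I would record the defining relations of the basis: by \eqref{H0basis} we have $b_{0j}=B_j(-1)=\delta_{0j}$ for all $0\le j\le N$, and $b^{(1)}_{ij}:=B_j'(x_i)=\delta_{ij}$ for $1\le i,j\le N$. Then, exactly as in \eqref{dphik}, I would expand $B_j\in{\mathbb P}_N$ in the Lagrange basis $\{l_p\}_{p=0}^N$, namely $B_j(x)=\sum_{p=0}^N B_j(x_p)l_p(x)$, differentiate once, and evaluate at $x=x_i$ to obtain the matrix identity $\bs B^{(1)}=\bs D\bs B$, i.e. $B_j'(x_i)=\sum_{p=0}^N d_{ip}b_{pj}$.

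Next I would restrict this identity to interior indices $1\le i,j\le N$. Because $b_{0j}=\delta_{0j}=0$ for $1\le j\le N$, the $p=0$ term drops out of the sum whenever $j\ge 1$, so for $1\le i,j\le N$ we get $B_j'(x_i)=\sum_{p=1}^N d_{ip}b_{pj}$; but the left-hand side is $\delta_{ij}$, which is precisely the $(i,j)$ entry of $\bs D_{\rm in}\bs B_{\rm in}$. Hence $\bs D_{\rm in}\bs B_{\rm in}=\bs I_N$, the first assertion. (Implicitly this uses that $\bs D_{\rm in}$ is nonsingular, which is the first-order analogue of the nonsingularity noted for $\bs D^{(k)}_{\rm in}$; this follows because the Gauss--Radau points include $-1$, so no nonzero polynomial of degree $\le N$ can have vanishing value at $-1$ and vanishing derivative at all the interior nodes — this is exactly the well-posedness of \eqref{Birkhoffa}.)

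For the second identity $\widetilde{\bs D}\bs B=\bs I_{N+1}$, I would argue block-wise on the $(N+1)\times(N+1)$ matrices indexed $0,\dots,N$. The matrix $\widetilde{\bs D}$ has first row $\bs e_1$ and remaining rows $(d_{ip})_{1\le i\le N,\,0\le p\le N}$. Row $0$ of $\widetilde{\bs D}\bs B$ is $\bs e_1\bs B=(b_{0j})_{0\le j\le N}=(\delta_{0j})_j$, the first row of $\bs I_{N+1}$. For rows $1\le i\le N$, entry $j$ is $\sum_{p=0}^N d_{ip}b_{pj}=B_j'(x_i)$ by the identity $\bs B^{(1)}=\bs D\bs B$ established above; when $j\ge 1$ this equals $\delta_{ij}$, and when $j=0$ it equals $B_0'(x_i)=0$ by \eqref{H0basis} (since $B_0\equiv 1$). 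Thus rows $1,\dots,N$ of $\widetilde{\bs D}\bs B$ form $[\bs 0\ \ \bs I_N]$, and altogether $\widetilde{\bs D}\bs B=\bs I_{N+1}$.

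I expect no serious obstacle here: the only point requiring a little care is the justification that $\bs D_{\rm in}$ is invertible, so that $\bs B_{\rm in}$ is genuinely a two-sided inverse rather than merely a one-sided one; but this is immediate from the unique solvability of the Birkhoff problem \eqref{Birkhoffa} (equivalently, from the explicit formula \eqref{interbasis}, which exhibits the $B_j$ concretely and shows the construction goes through). Everything else is the same bookkeeping as in Theorem \ref{BkBkthm}, with ``one derivative'' in place of ``two'' and ``Radau'' in place of ``Lobatto''.
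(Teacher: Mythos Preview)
Your proposal is correct and follows essentially the same approach as the paper's own proof: expand each $B_j$ in the Lagrange basis, differentiate and evaluate at the nodes to get $B_j'(x_i)=\sum_{p=0}^N d_{ip}b_{pj}$, then use $b_{0j}=0$ for $j\ge 1$ and $B_j'(x_i)=\delta_{ij}$ to read off $\bs D_{\rm in}\bs B_{\rm in}=\bs I_N$, and finally observe the first-column/row structure of $\bs B$ and $\widetilde{\bs D}$ to obtain $\widetilde{\bs D}\bs B=\bs I_{N+1}$. Your remark on invertibility of $\bs D_{\rm in}$ is harmless but unnecessary for the stated identities, since for square matrices a one-sided inverse is automatically two-sided.
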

 \begin{proof} For any $\phi\in {\mathbb P}_N,$ we write  $\phi(x)=\sum_{k=0}^N \phi(x_k) l_k(x),$ and
 \begin{equation}\label{BJrela0}
 \phi'(x)=\sum_{k=0}^N \phi(x_k) l_k'(x).
\end{equation}
Taking $\phi=B_j$ and setting $x=x_i,$ leads to
 \begin{equation}\label{BJrelaaa}
 B_j'(x_i)= \sum_{k=0}^N B_j(x_k) l_k'(x_i)=\sum_{k=0}^N d_{ik} b_{kj}.
\end{equation}
Thus, for $1\le i,j\le N,$ we obtain from $ B_j'(x_i)=\delta_{ij}$ and $b_{0j}=0$ that
 \begin{equation}\label{BJrela}
\delta_{ij}= \sum_{k=1}^N d_{ik} b_{kj},\quad 1\le i,j\le N,
\end{equation}
which implies $\bs D_{\rm in} \bs B_{\rm in}={\bs I}_N.$

Notice that the first column of $\bs B$ is $\bs e_1$ (cf.  \eqref{H0basis}), so
we verify from \eqref{BJrelaaa}-\eqref{BJrela} that  $ \widetilde {\bs  D}\bs B={\bs I}_{N+1}.$
 \end{proof}

As with Propositions \ref{prop:LGL}-\ref{ChebyGL},  we provide formulas to compute
 $\{B_j\}$ for Chebyshev- and Legendre-Gauss-Radau interpolation. To avoid repetition, we just give the derivation for the CGR case.
  \begin{prop}[{\bf Birkhoff interpolation at CGR points}]\label{ChebyGR} 
  The Birkhoff interpolation basis polynomials
  $\big\{B_j\big\}_{j=0}^N$ in \eqref{H0basis} at CGR points $\big\{x_j=-\cos(jh)\big\}_{j=0}^N,$
  $h=\frac{2\pi}{2N+1},$  are computed by
  \begin{equation}\label{Bjbass}
B_0(x)=1; \quad  B_j(x)=\sum_{k=0}^{N-1}\alpha_{kj} \partial_x^{-1} T_k(x), \;\;\;  1\le j\le N,
\end{equation}
where $\partial_x^{-1} T_k(x)$ is defined  in \eqref{chebyintegral}, and
\begin{equation}\label{alphakjformula}
\alpha_{kj}=  \frac{4}{ c_k(2N+1)}\big(T_k(x_j)-(-1)^{N+k}T_N(x_j)\big),
\end{equation}
with $c_0=2$ and $c_k=1$ for $k\ge 1.$
\end{prop}
\begin{proof} Writing $B_j'(x)=\sum_{k=0}^{N-1}\alpha_{kj}T_k(x),$ we derive from \eqref{Legorth} that
\begin{align*}
\alpha_{kj}&=\frac 2 {c_k\pi}\int_{-1}^1 \frac{B_j'(x)T_k(x)}{\sqrt{1-x^2}} dx=\frac 2 {c_k\pi}\Big(B_j'(-1)T_k(-1)\frac h 2+ T_k(x_j)h\Big),
\end{align*}
where we also used \eqref{H0basis} and the property that CGL quadrature is exact for all polynomials in ${\mathbb P}_{2N}$ (see e.g., \cite[Theorem 3.30]{ShenTangWang2011}).   Taking $k=N,$  we have from \eqref{Legorth} and \eqref{chebpm1} that   $\alpha_{kj}=0,$ and $B_j'(-1)=(-1)^{N+1} 2T_N(x_j).$ Thus \eqref{alphakjformula} follows.  Then direct integration leads to
$$
B_j(x)=\sum_{k=0}^{N-1}\alpha_{kj}\partial_x^{-1}T_k(x)+C.
$$
Since $\partial_x^{-1}T_k(-1)=0,$ we find $C=0$ from $B_j(-1)=0$ in \eqref{H0basis}.
\end{proof}

We can derive the formulas for computing $\{B_j\}$ at LGR points in a very similar fashion.
\begin{prop}[{\bf Birkhoff interpolation at LGR points}]\label{LegGR} Let $\{x_j,\omega_j\}_{j=0}^N$  be the LGR quadrature points
{\rm(}zeros of $P_N(x)+P_{N+1}(x)$ with  $x_0=-1${\rm)} and weights given by
 \begin{equation}\label{LGRweights}
 \omega_j=\frac 1 {(N+1)^2}\frac {1-x_j}{P_N^2(x_j)},\quad 0\le j\le N.
 \end{equation}
Then the Birkhoff interpolation basis polynomials
  $\big\{B_j\big\}_{j=0}^N$ in  \eqref{H0basis}  can be computed by
 \begin{equation}\label{BjbassLeg}
B_0(x)=1;\;\;\;  B_j(x)=\sum_{k=0}^{N-1} \alpha_{kj}\frac{\partial_x^{-1}P_{k}(x)}{\gamma_k},\; \;\; 1\le j\le N,
\end{equation}
where $\gamma_k=\frac 2 {2k+1},$   $\partial_x^{-1}P_{k}(x)$ is given in \eqref{Legenderivative}, and
\begin{equation}\label{entriescknew}
\alpha_{kj}=\big(P_k(x_j)-(-1)^{N+k}P_N(x_j)\big)\omega_j.
\end{equation}
 \end{prop}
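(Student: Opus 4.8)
The plan is to mimic exactly the derivation used in Proposition~\ref{ChebyGR}, replacing Chebyshev orthogonality and the CGR quadrature by Legendre orthogonality and the LGR quadrature \eqref{LGRweights}. First I would note that $B_0\equiv 1$ is forced by the conditions $B_0(-1)=1$, $B_0'(x_i)=0$ for $1\le i\le N$ in \eqref{H0basis}, since these $N+1$ conditions determine $B_0\in{\mathbb P}_N$ uniquely and the constant polynomial $1$ satisfies them. For $1\le j\le N$, since $B_j'\in{\mathbb P}_{N-1}$, I would expand it in the Legendre basis, $B_j'(x)=\sum_{k=0}^{N-1}\alpha_{kj}P_k(x)/\gamma_k$, so that by \eqref{Legorth} the coefficient is $\alpha_{kj}=\int_{-1}^1 B_j'(x)P_k(x)\,dx$.

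The key step is evaluating this integral by the LGR quadrature. The integrand $B_j'(x)P_k(x)$ has degree at most $(N-1)+k\le 2N-1$, and the $N+1$-point LGR quadrature is exact on ${\mathbb P}_{2N}$; hence $\alpha_{kj}=\sum_{i=0}^N B_j'(x_i)P_k(x_i)\omega_i$. Using the interpolation conditions $B_j'(x_i)=\delta_{ij}$ for $1\le i\le N$ from \eqref{H0basis}, only the $i=0$ and $i=j$ terms survive, giving $\alpha_{kj}=B_j'(-1)P_k(-1)\omega_0+P_k(x_j)\omega_j=(-1)^kB_j'(-1)\omega_0+P_k(x_j)\omega_j$ via \eqref{legpm1}. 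To eliminate the unknown $B_j'(-1)$, I would take $k=N$: then $\int_{-1}^1 B_j'(x)P_N(x)\,dx=0$ because $B_j'\in{\mathbb P}_{N-1}\perp P_N$, so $0=(-1)^NB_j'(-1)\omega_0+P_N(x_j)\omega_j$, whence $B_j'(-1)=-(-1)^N P_N(x_j)\omega_j/\omega_0$. Substituting back yields $\alpha_{kj}=\bigl(P_k(x_j)-(-1)^{N+k}P_N(x_j)\bigr)\omega_j$, which is \eqref{entriescknew}. Finally, integrating $B_j'$ term by term using \eqref{Legenderivative} gives $B_j(x)=\sum_{k=0}^{N-1}\alpha_{kj}\partial_x^{-1}P_k(x)/\gamma_k+C$, and since $\partial_x^{-1}P_k(-1)=0$ for all $k\ge 0$, the condition $B_j(-1)=0$ forces $C=0$, establishing \eqref{BjbassLeg}.

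The only genuinely delicate point is the bookkeeping with the LGR quadrature weight $\omega_0$ at the left endpoint: one must check that the degree count $(N-1)+N=2N-1\le 2N$ really does license exactness (it does, by the standard LGR exactness on ${\mathbb P}_{2N}$), and that the formula \eqref{LGRweights} for $\omega_0$ together with the factor $(-1)^{N+k}$ combine so that the $\omega_0$-dependence cancels in the final expression — which it does, because $\omega_0$ appears only through the ratio $\omega_0/\omega_0$ once $B_j'(-1)$ is back-substituted. Everything else is a routine transcription of the CGR argument. I expect no real obstacle; the proof is entirely parallel to that of Proposition~\ref{ChebyGR}, and in the paper's style one could reasonably just remark that the derivation follows the same lines and record only the formulas.
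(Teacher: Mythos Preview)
Your proposal is correct and follows essentially the same approach as the paper: the main text explicitly says the LGR derivation proceeds ``in a very similar fashion'' to the CGR proof of Proposition~\ref{ChebyGR}, and your argument is precisely that transcription. The only minor variant appearing in the paper's appendix computes $B_j'(-1)=L_j(-1)$ directly from the explicit form of $Q_N(x)=(P_N+P_{N+1})/(1+x)$ rather than via your $k=N$ orthogonality trick, but since $B_j'=L_j$ this is the same expansion and an inessential difference.
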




%
With the new basis at our disposal, we now apply it to solve first-order IVPs.  Consider
\begin{equation}\label{firstordereqnss}
u'(x)+\gamma(x) u(x)=f(x), \;\;  x\in I;\quad u(-1)=u_-,
\end{equation}
where $\gamma(x)$ and $f(x)$ are given continuous functions on $I,$  and $u_-$ is a given constant.
  The collocation scheme at Gauss-Radau points for \eqref{firstordereqnss} is to  find $u_N\in {\mathbb P}_N$ such that
\begin{equation}\label{firstordereqn}
u_N'(x_j)+\gamma(x_j) u_N(x_j)=f(x_j), \quad 1\le j\le N;\quad u_N(-1)=u_-.
\end{equation}
The matrix form of \eqref{firstordereqn} under the Lagrange interpolation basis $\{l_j\}_{j=0}^N,$  reads
\begin{equation}\label{firstordereqnA}
\big(\bs D_{\rm in}+\bs \Lambda_N \big)\bs u=\bs f- u_- \bs d_0,
\end{equation}
where $\bs D_{\rm in}$ is defined in \eqref{Dmatrixas}, and
 \begin{equation}\label{defunfn}
 \begin{split}
&\bs u=\big(u_N(x_1), \cdots, u_N(x_N)\big)^t,\quad \bs f=\big(f(x_1), \cdots, f(x_N)\big)^t,\\
&\bs d_0=\big(l_0'(x_1),\cdots, l_0'(x_N)\big)^t,\quad \bs \Lambda_N ={\rm diag}\big(\gamma(x_1),\cdots,\gamma(x_N)\big).
\end{split}
 \end{equation}
Note that the condition number of the coefficient matrix in \eqref{firstordereqnA}   grows like $N^2$.

Under the new basis  $\{B_j\}_{j=0}^N,$ we find from \eqref{H0basis} the matrix form:
\begin{equation}\label{firstordereqnB}
\big(\bs I_{N} + \bs \Lambda_N \bs B_{\rm in}\big)\bs v=\bs f- u_- \bar {\bs \gamma},
\end{equation}
where $\bs B_{\rm in}$ is defined in \eqref{Dmatrixas},  $\bs f$ is the same as in \eqref{defunfn}, and
 \begin{equation}\label{defunfnvd}
 \bs v=\big(u_N'(x_1),\cdots, u_N'(x_N)\big)^t,\quad  \bar {\bs \gamma}=\big(\gamma(x_1),\cdots,\gamma(x_N)\big)^t.
 \end{equation}

As a  comparison,  we tabulate in Table \ref{GRtab} the condition numbers of
\eqref{firstordereqnA} (LCOL) and \eqref{firstordereqnB} (BCOL)   with $\gamma=1, x^3$ and various $N.$ As what we have observed from previous section,  the condition numbers of BCOL are independent of $N,$ while those of  LCOL grow like $N^2.$
\begin{table}[!th]
\caption{ Comparison of the condition numbers}\label{GRtab}
\vspace*{-6pt}
{\small
\begin{tabular}{|c|c|c|c|c|c|
c|c|c|}
    \hline
    \multirow{3}{*}{$N$} & \multicolumn{4}{|c|}{$\gamma = 1$} & \multicolumn{4}{|c|}{$\gamma = x^3$}\\
    \cline{2-9}
    & \multicolumn{2}{|c|}{Chebyshev} & \multicolumn{2}{|c|}{Legendre} & \multicolumn{2}{|c|}{Chebyshev} & \multicolumn{2}{|c|}{Legendre}\\
    \cline{2-9}
    & BCOL & LCOL & BCOL & LCOL & BCOL & LCOL & BCOL & LCOL\\
    \hline
    32 & 2.35 & 3.61e+02 & 2.35 & 4.67e+02 & 2.16 & 6.77e+02 & 2.14 & 8.86e+02\\
    \hline
    64 & 2.35 & 1.42e+03 & 2.35 & 1.98e+03 & 2.15 & 2.66e+03 & 2.15 & 3.74e+03\\
    \hline
    128 & 2.35 & 5.65e+03 & 2.34 & 8.45e+03 & 2.15 & 1.06e+04 & 2.14 & 1.59e+04\\
    \hline
    256 & 2.35 & 2.25e+04 & 2.35 & 3.59e+04 & 2.15 & 4.21e+04 & 2.15& 6.74e+04\\
    \hline
    512 & 2.35 & 8.98e+04 & 2.35 & 1.52e+05 & 2.15 & 1.68e+05 & 2.15 & 2.85e+05\\
    \hline
    1024 & 2.35 & 3.59e+05 & 2.35 & 6.40e+05 & 2.15 & 6.72e+05 & 2.15 & 1.20e+06\\
    \hline
\end{tabular}
}
\end{table}

We next consider \eqref{firstordereqnss} with $\gamma(x)=x^3, f(x)=20\sin (500 x^2)$ and
a highly oscillatory solution (see \cite[Section 2.5]{OlverTownsend}):
 \begin{equation}\label{prob1os}
    u(x) = 20\exp\Big(\frac{-x^4}{4}\Big)\int_{-1}^x \exp\Big(\frac{t^4}{4}\Big)\sin(500t^2)\, {d}t.
\end{equation}
In Figure \ref{accgraphs11} (left), we plot the exact solution \eqref{prob1os}  at $2000$ evenly-spaced points against the
numerical solution obtained by  BCOL with $N=640$.  In Figure \ref{accgraphs11} (right), we plot
the maximum pointwise errors of LCOL and BCOL for the Chebyshev case. It indicates that even for large $N,$  the BCOL is quite stable.
 \begin{figure}[!ht]
        \begin{center}
            \begin{minipage}{0.4\textwidth}
                \includegraphics[width=1\textwidth]{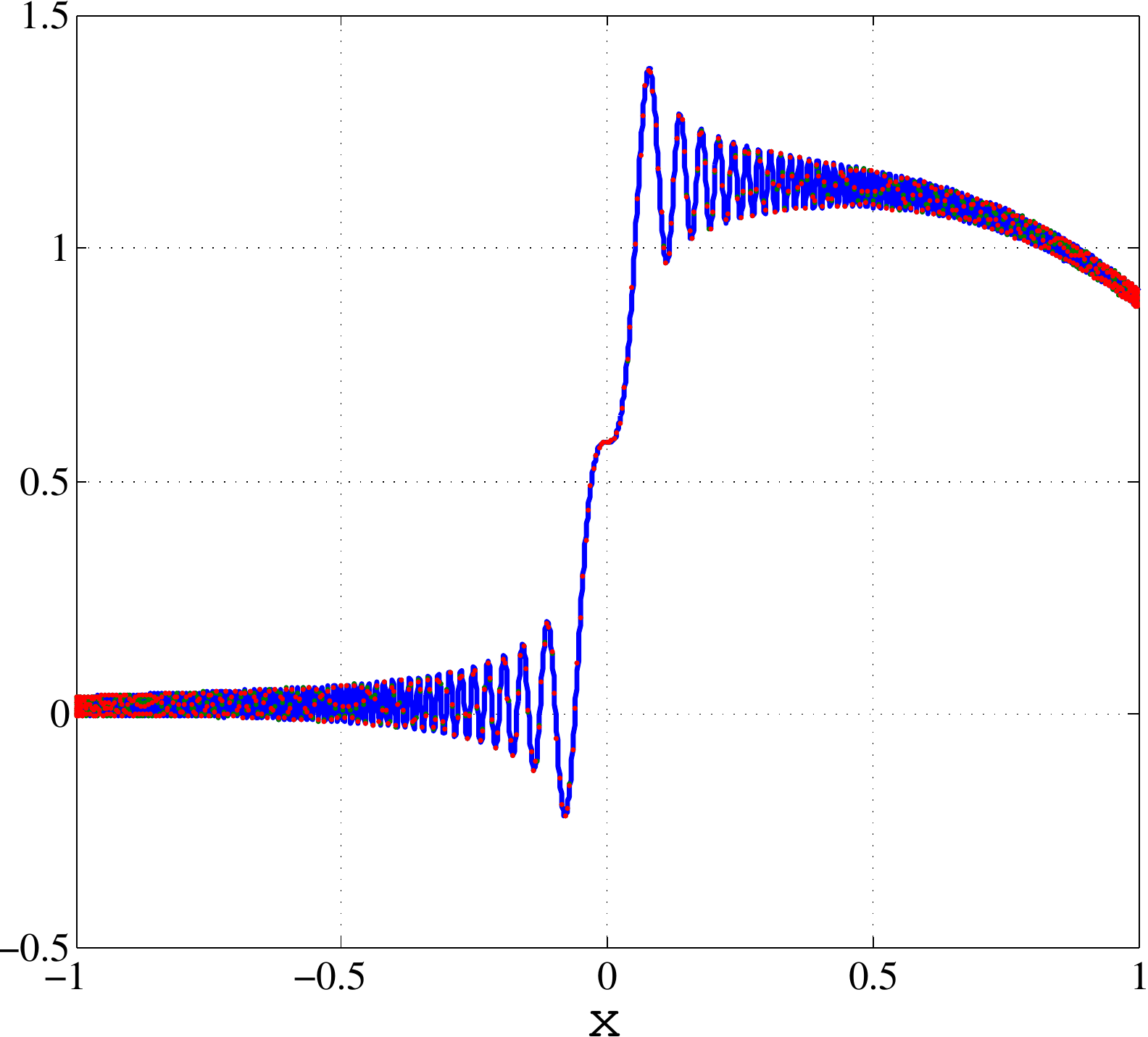}
            \end{minipage}\qquad
            \begin{minipage}{0.4\textwidth}
                \includegraphics[width=1\textwidth]{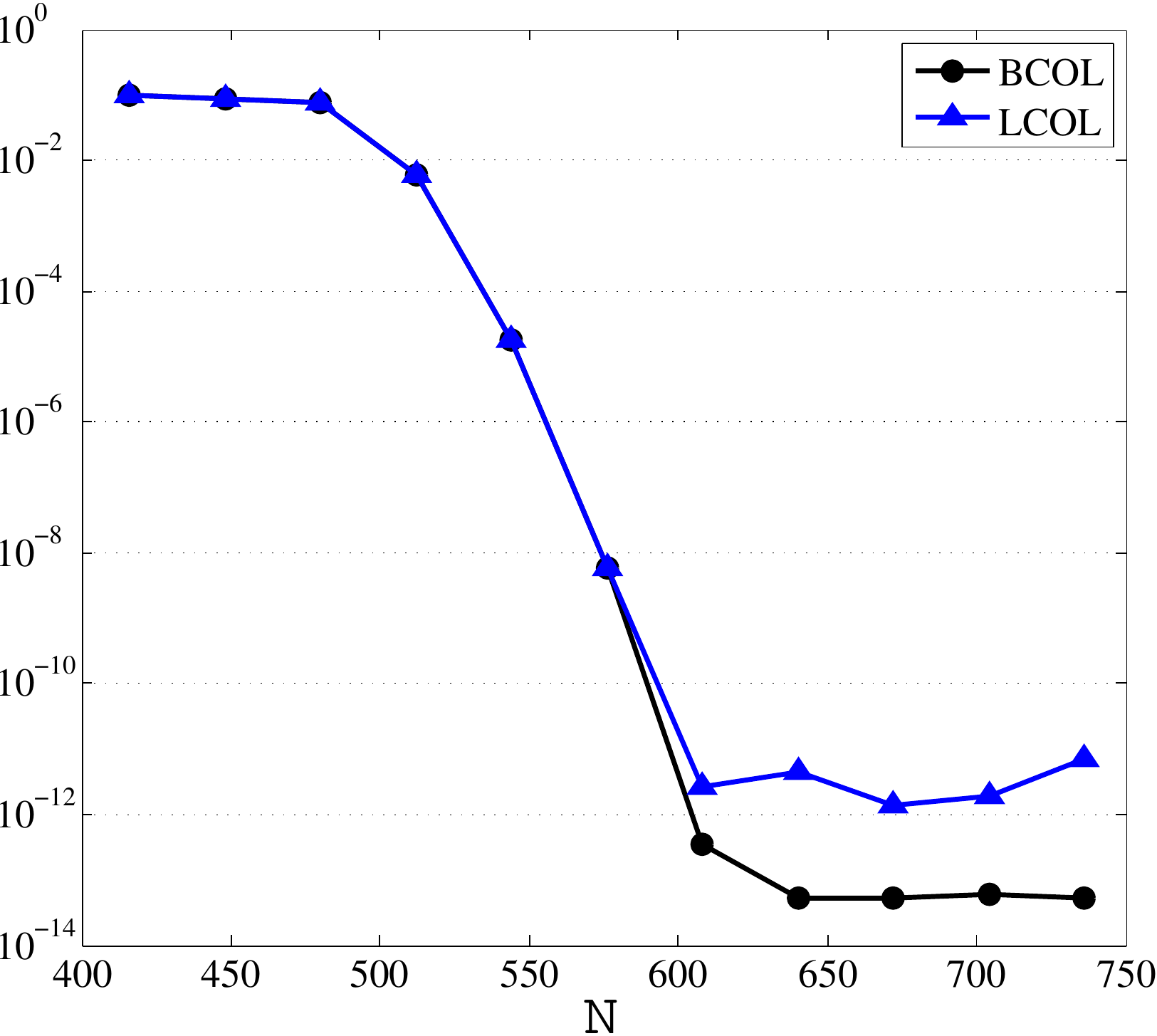}
            \end{minipage}
        \end{center}
        \caption{\small{Left: exact solution versus numerical solution.  Right: comparison of  numerical errors (Chebyshev).
        }}
        \label{accgraphs11}
    \end{figure}

\subsection{Higher order equations}  The proposed methods  can be directly extended to higher order BVPs.

\subsubsection{Third-order equations} For example, we consider
 $$
 -u'''(x)+r(x) u''(x)+s(x)u'(x)+t(x) u(x)=f(x),\quad x\in I; \quad u(\pm 1)=u_\pm,\;\; u'(1)=u_1.
 $$
As before, we associate it with a Birkhoff interpolation: Find $p\in {\mathbb P}_{N+1},$ such that
\begin{equation}\label{pxx3rd}
p(\pm 1)=u(\pm 1),\quad p'(1)=u'(1),\quad p'''(x_j)=u'''(x_j),\;\;\; 1\le j\le  N-1,
\end{equation}
where $\{x_j\}_{j=1}^{N-1}$ are interior LGL points.   Then
\begin{equation}\label{pxx3rdpx}
p(x)=u(-1)B_0(x)+\sum_{j=1}^{N-1} u'''(x_j) B_j(x)+u(1)B_N(x)+u'(1)B_{N+1}(x),
\end{equation}
where the basis polynomials $\big\{B_j(x)\big\}_{j= 0}^{N + 1}$ are defined by
\begin{align*}
&B_0(-1)=1,\quad B_0(1)=0,\quad B_0'(1)=0,\quad  B_0'''(x_i)=0,\;\;\; 1\le i\le N-1;\\
& B_j(-1)=0,\quad B_j(1)=0,\quad B_j'(1)=0, \quad   B_j'''(x_i)=\delta_{ij},\;\;\;  1\le i,j\le N-1;\\ 
&B_N(-1)=0,\quad B_N(1)=1,\quad B_N'(1)=0,\quad  B_N'''(x_i)=0,\;\;\; 1\le i\le N-1;\\
&B_{N+1}(-1)=0,\quad B_{N+1}(1)=0,\quad B_{N+1}'(1)=1,\quad  B_{N+1}'''(x_i)=0,\;\;\; 1\le i\le N-1.
\end{align*}
We can compute the basis and the associated pseudospectral integration matrices on  CGL and LGL points, which we leave to the interested readers.
Here, we just tabulate in Table \ref{GLtab3} the condition numbers of the new approach on CGL points.  In all cases,
the condition numbers are independent of $N.$
{\small
\begin{table}[!th]
\caption{\small Condition numbers of (\ref{pxx3rd}) on CGL points}
\vspace*{-6pt}
\begin{tabular}{|c|c|c|c|c|}
    \hline
    $N$ & $r=s=0, t=1$ & $r=0, s=t=1$ & $s=0, r=t=1$ & $r=s=t=1$ \\
    \hline
128 & 1.16 & 1.56 & 2.22 & 1.80\\
    \hline
256 & 1.16 & 1.56 & 2.22 & 1.80\\
    \hline
512 & 1.16 & 1.56 & 2.23 & 1.80\\
    \hline
1024 & 1.16 & 1.56 & 2.23 & 1.80\\
    \hline
\end{tabular}\label{GLtab3}
\end{table}
}

We next apply the well-conditioned collocation method to solve the Korteweg-de Vires (KdV) equation:
\begin{equation}\label{KdV3}
	\partial_t u + u \partial_x u + \partial_x^3 u = 0; \quad u(x,0) = u_0(x),
\end{equation}
with the exact soliton solution
\begin{equation}\label{KdV3s}
	u(x, t) = 12 \kappa^2{\rm sech}^2(\kappa(x - 4\kappa^2t - x_0)),
\end{equation}
where $\kappa$ and $x_0$ are constants. Since  the solution decays exponentially, we can approximate the initial value problems by imposing
homogeneous boundary conditions over $x\in(-L, L)$ as long as the soliton wave does not reach the boundaries. Let $\tau$ be the time step size,
and $\{\xi_j=Lx_j\}_{j=0}^N$  with $\{x_j\}_{j=0}^N$ being  CGL points.   Then we adopt the Crank-Nicolson leap-frog scheme
in time and the new collocation method in space, that is, find   $u_N^{k+1}\in {\mathcal P}_{N+1}$ such that
for $1\le j\le N-1,$
\begin{equation}\label{KdV3d}
\begin{split}
&\frac{u^{k + 1}_{N}(\xi_j) - u^{k - 1}_N (\xi_j)}{2\tau}  +\partial_x^3 \bigg(\frac{ u_N^{k + 1} +u_N^{k - 1}}{2}\bigg)(\xi_j)
= - \partial_x u_N^k(\xi_j)u_N^k(\xi_j),\\
& u_N^k(\pm L)=\partial_x u_N^k (L)=0,\quad k\ge 0.
\end{split}
\end{equation}
Here, we take $\kappa = 0.3$, $x_0 = -20, L = 50$ and $\tau=0.001.$
We depict in Figure \ref{KdV3graphs} (left) the numerical evolution of the solution with $t\le 50$ and $N=160.$
In Figure \ref{KdV3graphs} (right), we plot the maximum point-wise errors for various $N$ at $t=1,50.$
We see the errors decay exponentially, and the scheme is stable. Indeed, the proposed collocation method
produces  very accurate and stable solution as the well-conditioned dual-Petrov-Galerkin method in
\cite{Shen02b}.

\begin{figure}[!ht]
        \begin{center}
            \begin{minipage}{0.42\textwidth}
                \includegraphics[width=1\textwidth]{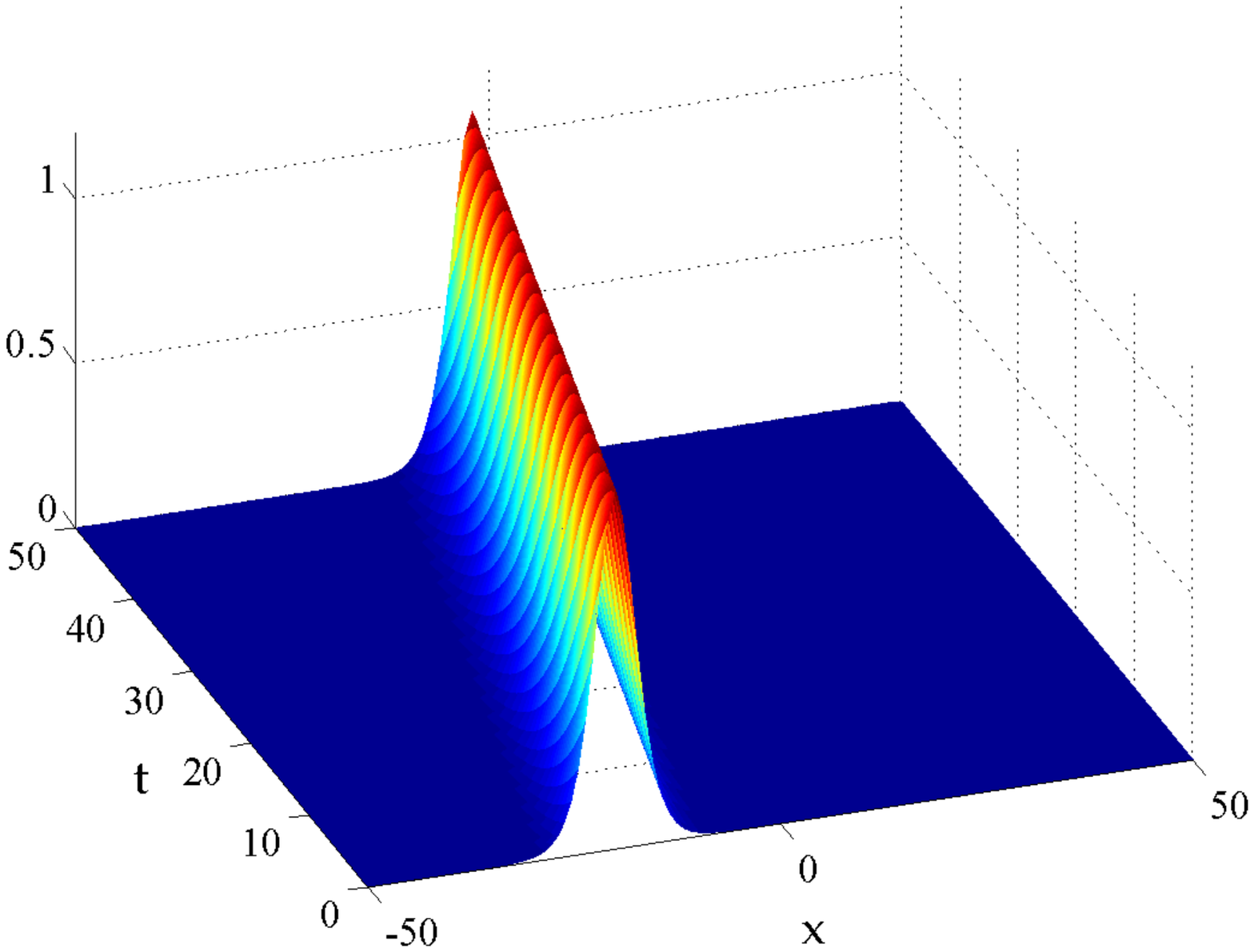}
            \end{minipage}\qquad
            \begin{minipage}{0.4\textwidth}
                \includegraphics[width=0.95\textwidth]{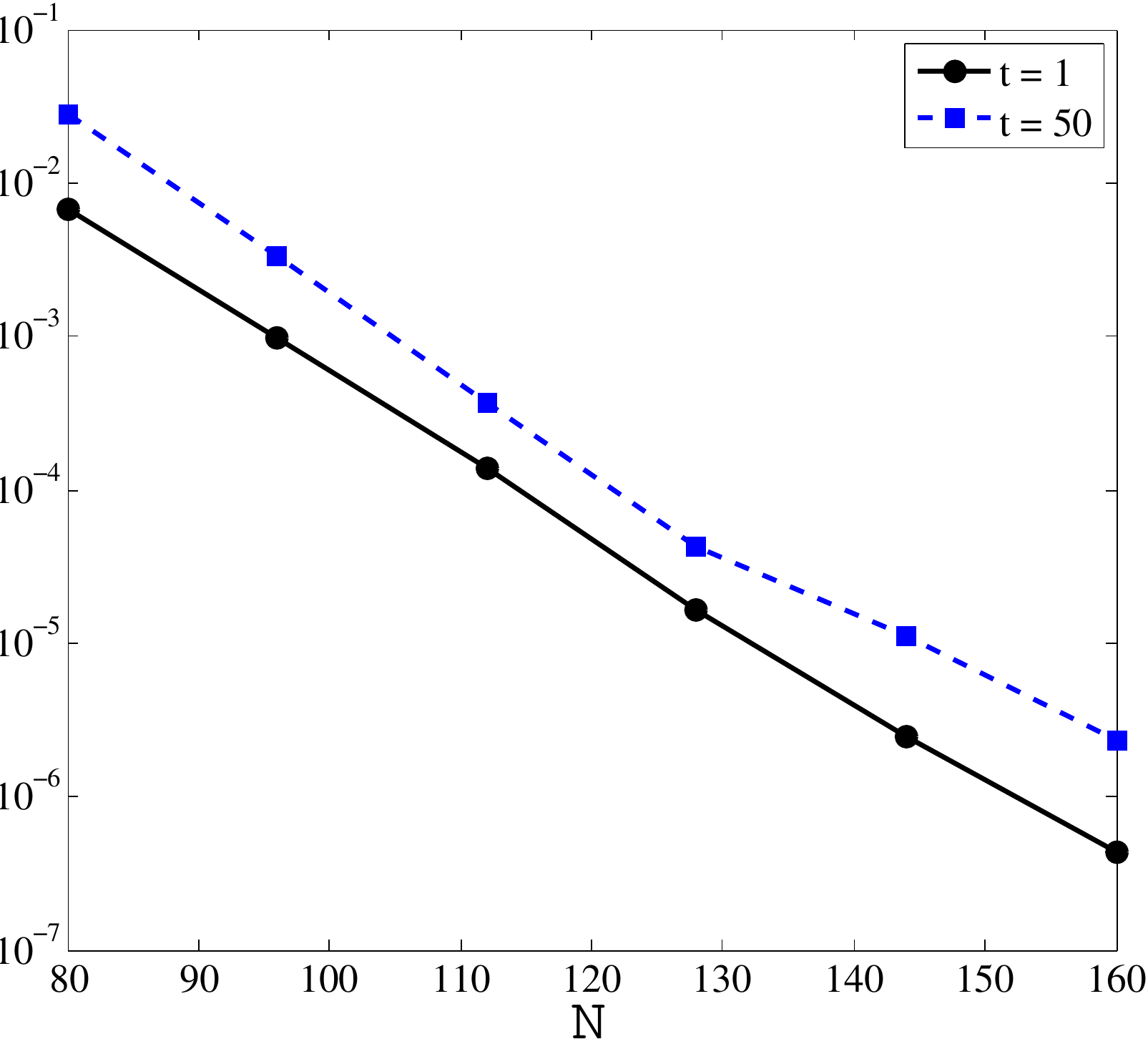}
            \end{minipage}
        \end{center}
        \caption{\small{Left: time evolution of numerical solution for $N = 160$.  Right: maximum absolute error at interior collocation points at given $t$ for given $N$.}}
        \label{KdV3graphs}
    \end{figure}

\subsubsection{Fifth-order equations}
We can  extend the  notion of Birkhoff interpolation  and derive the new basis for fifth-order problem straightforwardly.
Here, we omit the details, but just  test the method on the problem:
\begin{equation}\label{fifth}
	u^{(5)}(x) + \sin(10x)  u'(x) + x u(x) = f(x), \quad x \in I;  \quad u(\pm1) = u'(\pm1) = u''(1) = 0,
\end{equation}
with exact solution	$u(x) = \sin^3(\pi x).$   Here, we compare the usual Lagrange collocation method (LCOL), the new Birkhoff collocation
(BCOL) scheme at CGL points, and the special collocation  method (SCOL).  We refer to the SCOL as in \cite[Page 218]{ShenTangWang2011}, which
is based on the interpolation problem: Find $p\in {\mathbb P}_{N+3}$ such that
\begin{align*}
& p(y_j)=u(y_j),\;\;\; 1\le j\le N-1; \;\; p^{(k)}(\pm 1)=u^{(k)}(\pm 1),\;\;\; k=0,1;\;\;\;  p''(1)=u''(1),
\end{align*}
where $\{y_j\}_{j=1}^{N-1}$ are zeros of the Jacobi polynomial $P_{N-1}^{(3,2)}(x).$

We plot in Figure \ref{fifthgraph} (left) convergence behavior of three methods, which clearly indicates the new approach
is well-conditioned and significantly superior to the other two.
\begin{figure}[!ht]
        \begin{center}
            \begin{minipage}{0.4\textwidth}
                \includegraphics[width=1\textwidth]{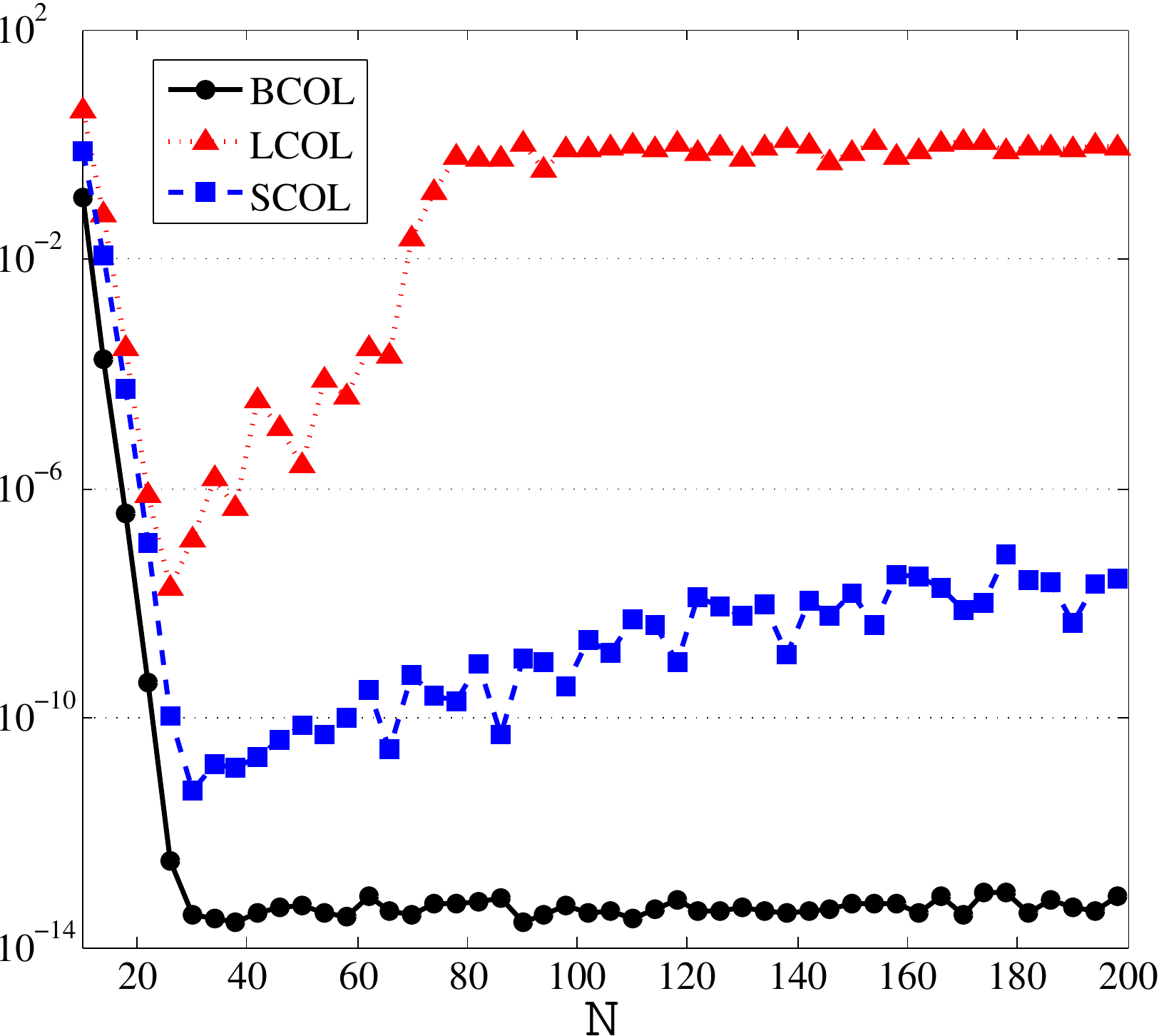}
            \end{minipage}\qquad
                        \begin{minipage}{0.4\textwidth}
                \includegraphics[width=1\textwidth]{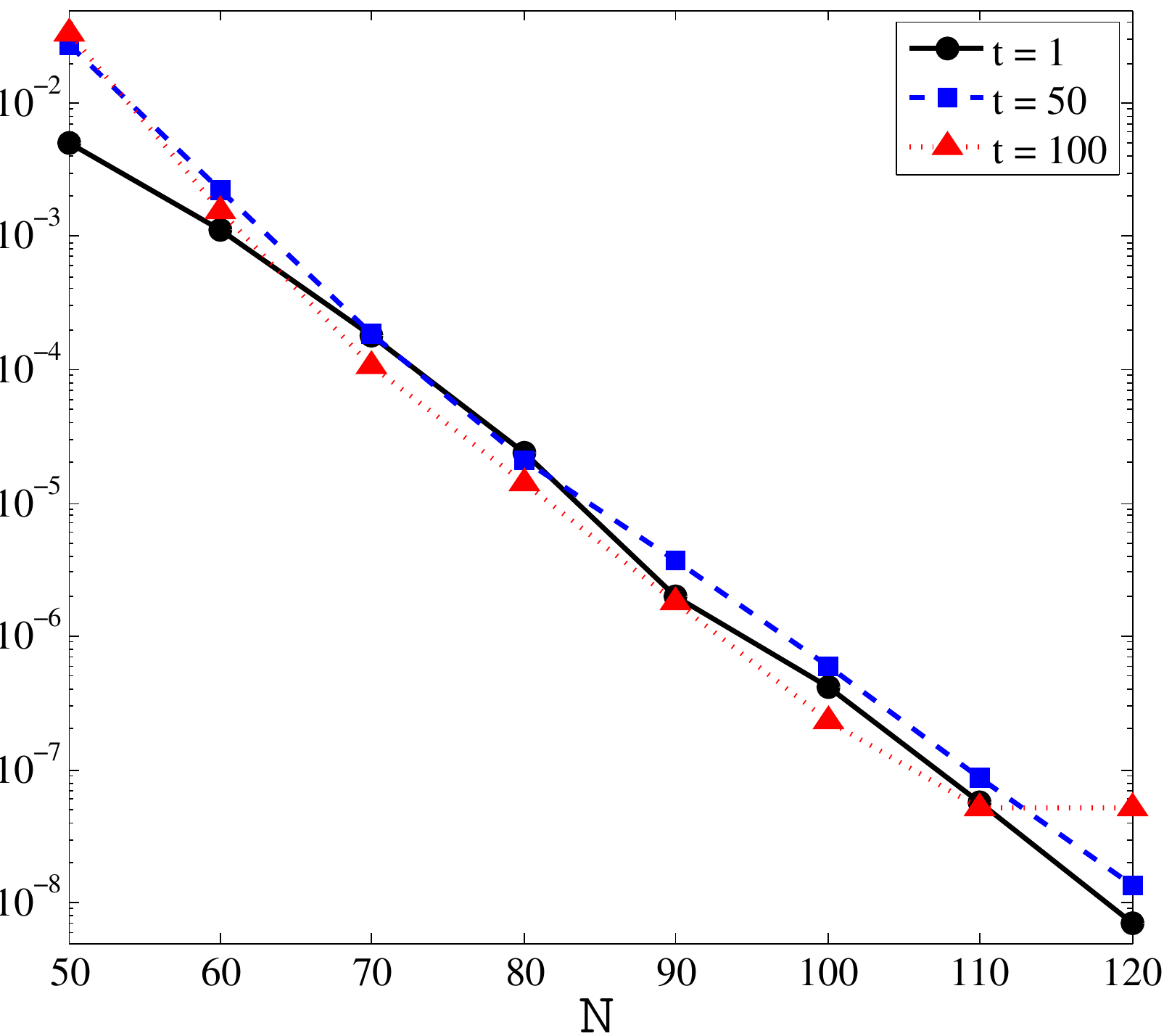}
            \end{minipage}
        \end{center}
        \caption{\small{Comparison of three collocation schemes (left), and  maximum pointwise errors
        of the Crank-Nicolson-leap-frog and BCOL for fifth-order KdV equation (right).}}
        \label{fifthgraph}
    \end{figure}
We  also apply the new method in space to solve the fifth-order KdV equation:
\begin{equation}\label{KdV5}
	\partial_t u + \gamma u \partial_x u + \nu \partial_x^3 u  - \mu \partial^5_x u = 0,\quad  u(x,0) = u_0(x).
\end{equation}
For  $\gamma \neq 0$,  and  $\mu\nu > 0,$ it has the exact soliton solution (cf. \cite[Page 233]{ShenTangWang2011} and the original references therein):
\begin{equation}\label{KdV5s}
	u(x, t) = \eta_0 + \frac{105 \nu^2}{169\mu\gamma} {\rm sech}^4\left(\sqrt{\frac{\nu}{52\mu}}\left[x - \left(\gamma\eta_0 + \frac{36\nu^2}{169\mu}\right)t - x_0\right]\right),
\end{equation}
where  $\eta_0$ and $x_0$ are any constants.  As with \eqref{KdV3d}, we use the Crank-Nicolson-leap-frog in time and new collocation
method in space.   In Figure  \ref{fifthgraph} (right), we depict the maximum pointwise errors at CGL points for
 \eqref{KdV5}-\eqref{KdV5s} with $\mu = \gamma = 1$, $\nu = 1.1$, $\eta_0 = 0$, $x_0 = -10,$  $L = 50$ and
 $\tau = 0.001.$  It indicates that the scheme is stable and accurate, which is comparable to the well-conditioned
 dual-Petrov-Galerkin scheme (cf. \cite[Chapter 6]{ShenTangWang2011}).

\subsection{Multi-dimensional cases} 
For example,  we consider the two-dimensional BVP:
\begin{equation}\label{2dbvp}
    \Delta u-  \gamma u = f \;\;\;  \text{ in  }\;\;  \Omega=(-1,1)^2; \;\;\;  u = 0\;\; \text{ on }\;\; \partial\Omega,
\end{equation}
where $\gamma\ge 0$ and $f\in C(\Omega).$  The collocation scheme is on  tensorial LGL points:  find $u_N(x,y)\in {\mathbb P}_N^2$ such that
\begin{equation}\label{2dbvpcol}
    \big(\Delta u_N - \gamma u_N\big)(x_i,y_j) = f(x_i,y_j),\;\;  1\le i,j\le N-1; \;\;\;  u_N = 0\;\; \text{ on }\;\; \partial\Omega,
\end{equation}
where $\{x_i\}$ and $\{y_j\}$ are LGL points.
As with the spectral-Galerkin method \cite{Shen94b,She.W07b}, we use the matrix decomposition (or diagonalization)  technique (see  \cite{LRT64}).  We illustrate the idea by using  partial  diagonalization  (see \cite[Section 8.1]{ShenTangWang2011}). Write
$$u_N(x,y)=\sum_{k,l=1}^{N-1} u_{kl} B_k(x)B_l(y),$$
and obtain from \eqref{2dbvpcol} the system:
\begin{equation}\label{odesystm}
\bs U\bs B_{\rm in}^{t}+\bs B_{\rm in}\bs U-\gamma \bs B_{\rm in}\bs U\bs B_{\rm in}^t=\bs F,
\end{equation}
where $\bs U=(u_{kl})_{1\le k,l\le N-1}$ and $\bs F=(f_{kl})_{1\le k,l\le N-1}.$
We consider the generalized eigen-problem:
$$ \bs B_{\rm in}\,\bs x =\lambda \big(\bs I_{N-1} -\gamma \bs B_{\rm in}\big)\bs x. $$
We know from  Proposition \ref{addprof} and Remark \ref{Chebycond} that the eigenvalues are distinct. Let $\bs \Lambda$ be the diagonal
matrix of the eigenvalues, and  $\bs E$ be the matrix whose columns are  the corresponding eigenvectors.
Then we have
$$
\bs B_{\rm in}\bs E= \big(\bs I_{N-1} -\gamma \bs B_{\rm in}\big)\bs E \bs \Lambda.
$$
We describe  the partial diagonalization (see \cite[Section 8.1]{ShenTangWang2011}). Set  $\bs U=\bs E\bs V.$  Then \eqref{odesystm}
becomes
\begin{equation}\label{eqnssab0}
\bs V \bs B_{\rm in}^t +\bs \Lambda \bs V=\bs G:=\bs E^{-1} \big(\bs I_{N-1} -\gamma \bs B_{\rm in}\big)^{-1} \bs F.
\end{equation}
Taking transpose of the above equation leads to
\begin{equation}\label{eqnssab}
\bs B_{\rm in} \bs V^t +\bs V^t \bs \Lambda =\bs G^t.
\end{equation}
Let $\bs v_p$ be the transpose of  $p$-th row of $\bs V,$ and likewise for $\bs g_p.$ Then we solve the systems:
\begin{equation}\label{eqnssabc}
\big(\bs B_{\rm in} +\lambda_p \bs I_{N-1}\big)\bs v_p =\bs g_p, \quad p=1,2,\cdots,N-1.
\end{equation}
As shown in Section \ref{sect2}, the coefficient matrix is well-conditioned.  Note that this process can be extended to three dimensions
straightforwardly.

%
%

As a numerical illustration, we consider \eqref{2dbvp} with $\gamma = 0$ and $u(x, y) = \sin(4\pi x)\sin(4\pi y).$
In Figure \ref{accuracygraph2D},  we graph the maximum pointwise  errors against various $N$ of the new approach,
which is comparable to the spectral-Galerkin approach in \cite{Shen94b}.
 \begin{figure}[!ht]
        \begin{center}
            \begin{minipage}{0.4\textwidth}
                \includegraphics[width=1\textwidth]{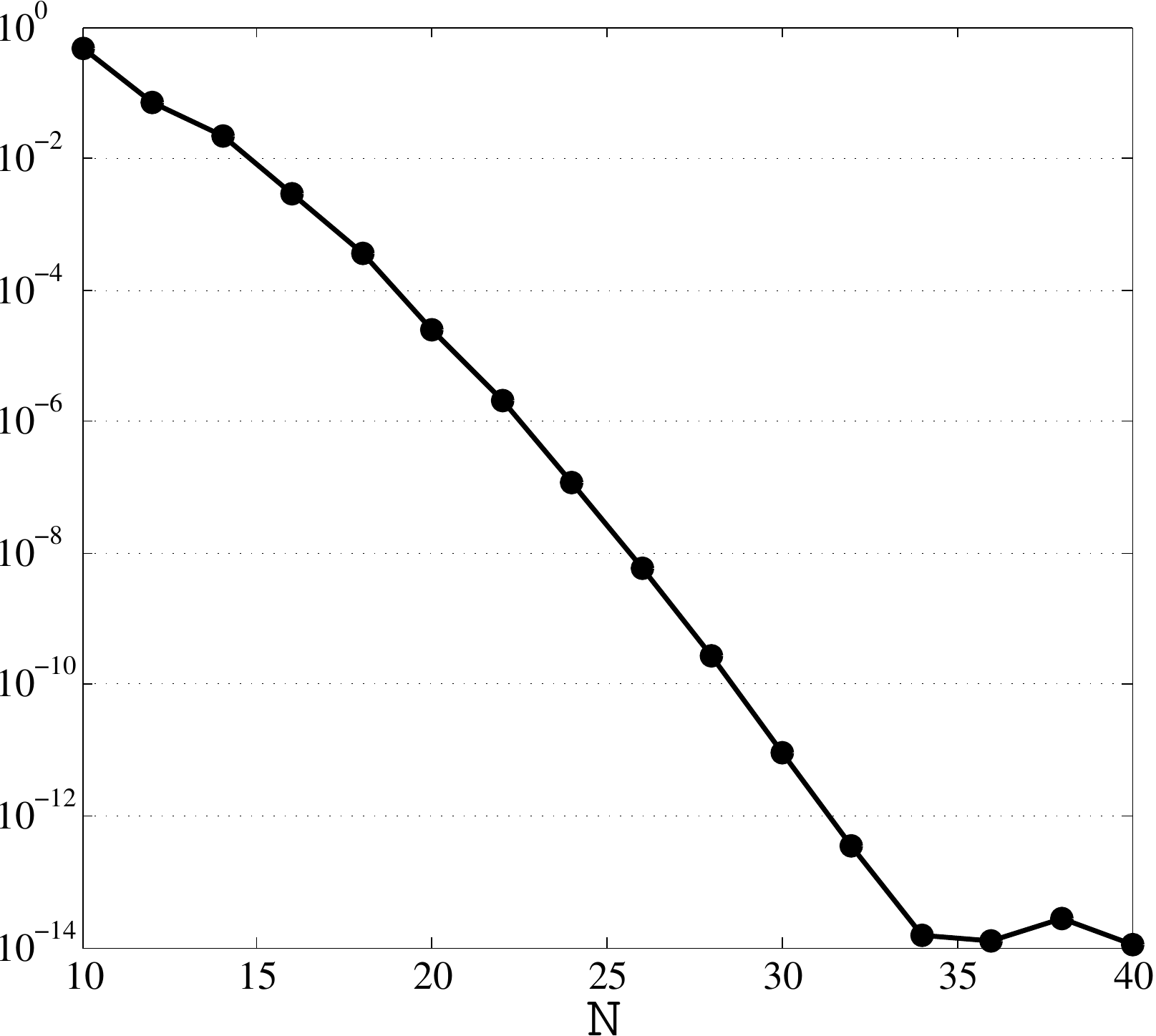}
            \end{minipage}\qquad
            \begin{minipage}{0.4\textwidth}
                \includegraphics[width=1\textwidth]{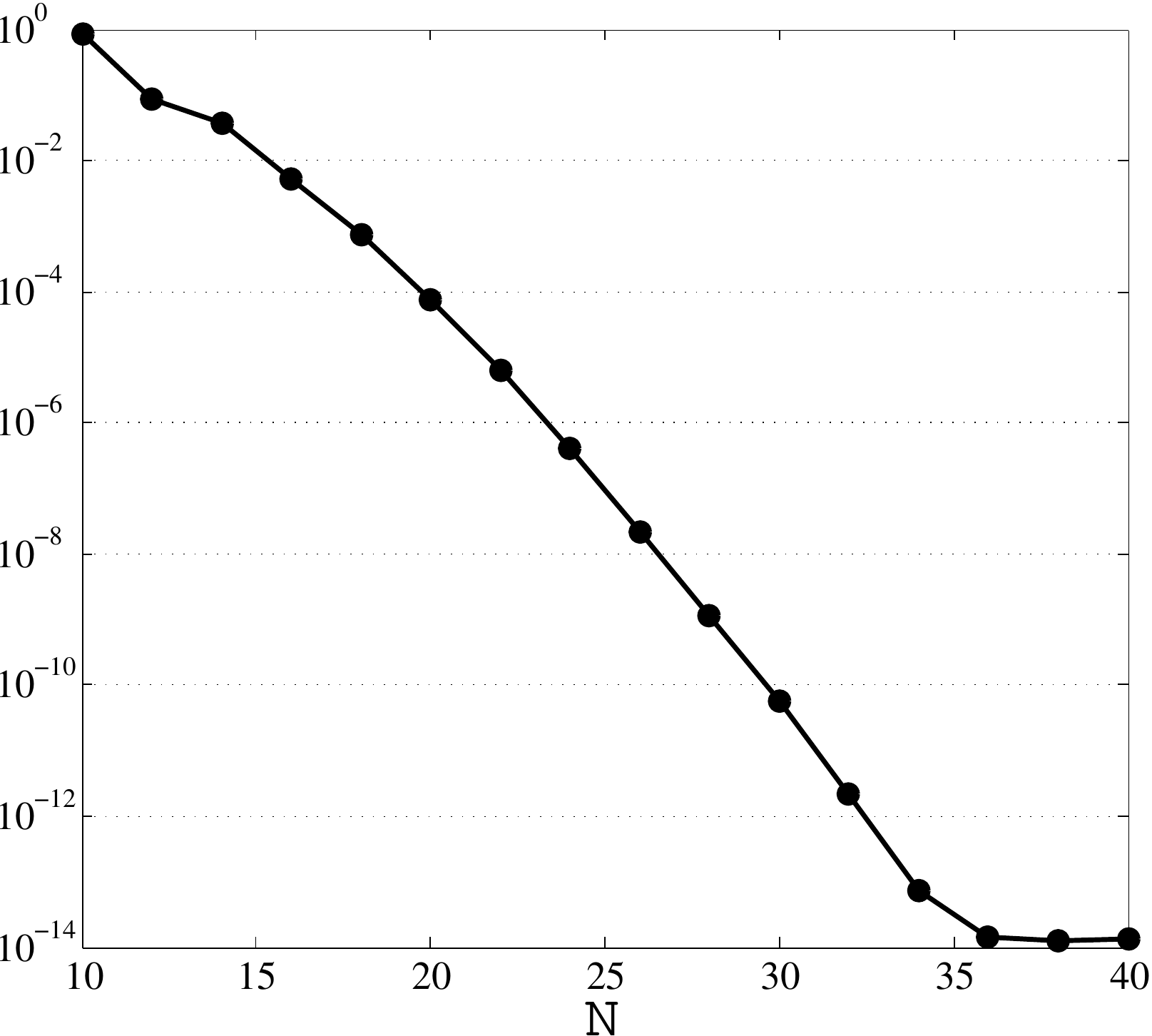}
            \end{minipage}
        \end{center}
        \caption{\small{Maximum pointwise  errors. Left: LGL; right: CGL.}}
        \label{accuracygraph2D}
    \end{figure}

%

\newpage
\noindent\underline{\large\bf Concluding remarks}
\vskip 6pt

In this paper, we tackled the longstanding issue with ill-conditioning of collocation/pseudospectral methods from
a new perspective. More precisely, we considered special Birkhoff interpolation problems that produced dual nature basis functions.
Firstly, the collocation systems under the new basis are well-conditioned, and the matrix corresponding to the
highest derivative of the equation  is diagonal or identity. The new collocation approach could be viewed as the analogue of the well-conditioned
Galerkin method in \cite{Shen94b}. Secondly, this approach led to optimal integration preconditioners for usual collocation schemes
based on Lagrange interpolation.  For the first time, we introduced in this paper the notion of pseudospectral integration matrix.

\vskip 10pt
\noindent\underline{\large\bf Acknowledgement}
\vskip 6pt

The first author would like to thank Prof. Benyu Guo and Prof. Jie Shen  for fruitful discussions, and
thank Prof. Zhimin Zhang for the stimulating Birkhoff interpolation problem considered in the recent paper  \cite{ZhangInp2012}.

\bibliography{Refcol}

\begin{thebibliography}{10}

\bibitem{Boulmezaoud2007}
T.Z. Boulmezaoud and J.M. Urquiza.
\newblock On the eigenvalues of the spectral second order differentiation
  operator and application to the boundary observability of the wave equation.
\newblock {\em J. Sci. Comput.}, 31(3):307--345, 2007.

\bibitem{Boyd01}
J.P. Boyd.
\newblock {\em {C}hebyshev and {F}ourier {S}pectral {M}ethods}.
\newblock Dover Publications Inc., 2001.

\bibitem{Can09}
C.~Canuto.
\newblock High-order methods for {PDE}s: recent advances and new perspectives.
\newblock In {\em I{CIAM} 07---6th {I}nternational {C}ongress on {I}ndustrial
  and {A}pplied {M}athematics}, pages 57--87. Eur. Math. Soc., Z\"urich, 2009.

\bibitem{CA10}
C.~Canuto, P.~Gervasio, and A.~Quarteroni.
\newblock Finite-element preconditioning of {G}-{NI} spectral methods.
\newblock {\em SIAM J. Sci. Comput.}, 31(6):4422--4451, 2009/10.

\bibitem{CHQZ06}
C.~Canuto, M.Y. Hussaini, A.~Quarteroni, and T.A. Zang.
\newblock {\em {Spectral Methods: Fundamentals in Single Domains}}.
\newblock Springer, Berlin, 2006.

\bibitem{CA85}
C.~Canuto and A.~Quarteroni.
\newblock Preconditioned minimal residual methods for {C}hebyshev spectral
  calculations.
\newblock {\em J. Comput. Phys.}, 60(2):315--337, 1985.

\bibitem{ChenS12}
F.~Chen and J.~Shen.
\newblock Efficient spectral-{G}alerkin methods for systems of coupled
  second-order equations and their applications.
\newblock {\em J. Comput. Phys.}, 231(15):5016--5028, 2012.

\bibitem{clenshaw1957numerical}
C.W. Clenshaw.
\newblock The numerical solution of linear differential equations in
  {C}hebyshev series.
\newblock In {\em Mathematical Proceedings of the Cambridge Philosophical
  Society}, volume~53, pages 134--149. Cambridge Univ Press, 1957.

\bibitem{CoL10}
F.A. Costabile and E.~Longo.
\newblock A {B}irkhoff interpolation problem and application.
\newblock {\em Calcolo}, 47(1):49--63, 2010.

\bibitem{coutsias1996integration}
E.~Coutsias, T.~Hagstrom, J.S. Hesthaven, and D.~Torres.
\newblock Integration preconditioners for differential operators in spectral
  $\tau$-methods.
\newblock In {\em Proceedings of the Third International Conference on Spectral
  and High Order Methods, Houston, TX}, pages 21--38, 1996.

\bibitem{CHT96}
E.A. Coutsias, T.~Hagstrom, and D.~Torres.
\newblock An efficient spectral method for ordinary differential equations with
  rational function coefficients.
\newblock {\em Math. Comp.}, 65(214):611--635, 1996.

\bibitem{Dev.M85}
M.O. Deville and E.H. Mund.
\newblock {C}hebyshev pseudospectral solution of second-order elliptic
  equations with finite element preconditioning.
\newblock {\em J. Comput. Phys.}, 60:517--533, 1985.

\bibitem{Dev.M90}
M.O. Deville and E.H. Mund.
\newblock Finite element preconditioning for pseudospectral solutions of
  elliptic problems.
\newblock {\em SIAM J. Sci. Stat. Comput.}, 11:311--342, 1990.

\bibitem{Dris10}
T.A. Driscoll.
\newblock Automatic spectral collocation for integral, integro-differential,
  and integrally reformulated differential equations.
\newblock {\em J. Comput. Phys.}, 229(17):5980--5998, 2010.

\bibitem{DBT08}
T.A. Driscoll, F.~Bornemann, and L.N. Trefethen.
\newblock The {C}hebop system for automatic solution of differential equations.
\newblock {\em BIT}, 48(4):701--723, 2008.

\bibitem{El69}
S.E. El-Gendi.
\newblock Chebyshev solution of differential, integral and integro-differential
  equations.
\newblock {\em Comput. J.}, 12:282--287, 1969/1970.

\bibitem{Elbarbary06}
M.E. Elbarbary.
\newblock Integration preconditioning matrix for ultraspherical pseudospectral
  operators.
\newblock {\em SIAM J. Sci. Comput.}, 28(3):1186--1201 (electronic), 2006.

\bibitem{ElS13}
K.T. Elgindy and K.A. Smith-Miles.
\newblock Solving boundary value problems, integral, and integro-differential
  equations using {G}egenbauer integration matrices.
\newblock {\em J. Comput. Appl. Math.}, 237(1):307--325, 2013.

\bibitem{EZGu99}
A.~Ezzirani and A.~Guessab.
\newblock A fast algorithm for {G}aussian type quadrature formulae with mixed
  boundary conditions and some lumped mass spectral approximations.
\newblock {\em Math. Comp.}, 68(225):217--248, 1999.

\bibitem{Forn96}
B.~Fornberg.
\newblock {\em A Practical Guide to Pseudospectral Methods}.
\newblock Cambridge University Press, 1996.

\bibitem{FG88}
D.~Funaro and D.~Gottlieb.
\newblock A new method of imposing boundary conditions in pseudospectral
  approximations of hyperbolic equations.
\newblock {\em Math. Comp.}, 51(184):599--613, 1988.

\bibitem{ghoreishi2004tau}
F.~Ghoreishi and S.M. Hosseini.
\newblock The {T}au method and a new preconditioner.
\newblock {\em J. Comput. Appl. Math.}, 163(2):351--379, 2004.

\bibitem{gottlieb1977numerical}
D.~Gottlieb and S.A. Orszag.
\newblock {\em {Numerical Analysis of Spectral Methods: Theory and
  Applications}}.
\newblock Society for Industrial Mathematics, 1977.

\bibitem{Greengard91}
L.~Greengard.
\newblock Spectral integration and two-point boundary value problems.
\newblock {\em SIAM J. Numer. Anal.}, 28(4):1071--1080, 1991.

\bibitem{Guobk98}
B.Y. Guo.
\newblock {\em Spectral Methods and Their Applications}.
\newblock World Scientific Publishing Co. Inc., River Edge, NJ, 1998.

\bibitem{Guo.SW06}
B.Y. Guo, J.~Shen, and L.L. Wang.
\newblock Optimal spectral-{G}alerkin methods using generalized {J}acobi
  polynomials.
\newblock {\em J. Sci. Comput.}, 27(1-3):305--322, 2006.

\bibitem{Hesthaven98}
J.~Hesthaven.
\newblock Integration preconditioning of pseudospectral operators. {I}. {B}asic
  linear operators.
\newblock {\em SIAM J. Numer. Anal.}, 35(4):1571--1593, 1998.

\bibitem{HGG07}
J.~Hesthaven, S.~Gottlieb, and D.~Gottlieb.
\newblock {\em Spectral Methods for Time-Dependent Problems}.
\newblock Cambridge Monographs on Applied and Computational Mathematics.
  Cambridge, 2007.

\bibitem{Kim.P96}
S.D. Kim and S.V. Parter.
\newblock Preconditioning {C}hebyshev spectral collocation method for elliptic
  partial differential equations.
\newblock {\em SIAM J. Numer. Anal.}, 33(6):2375--2400, 1996.

\bibitem{Kim.P97}
S.D. Kim and S.V. Parter.
\newblock Preconditioning {C}hebyshev spectral collocation by finite difference
  operators.
\newblock {\em SIAM J. Numer. Anal.}, 34(3):939--958, 1997.

\bibitem{LivP10}
P.W. Livermore.
\newblock Galerkin orthogonal polynomials.
\newblock {\em J. Comput. Phys.}, 229(6):2046--2060, 2010.

\bibitem{BirkhoffBk}
G.G. Lorentz, K.~Jetter, and S.D. Riemenschneider.
\newblock {\em Birkhoff Interpolation}, volume~19 of {\em Encyclopedia of
  Mathematics and its Applications}.
\newblock Addison-Wesley Publishing Co., Reading, Mass., 1983.

\bibitem{LRT64}
R.E. Lynch, J.R. Rice, and D.H. Thomas.
\newblock Direct solution of partial differential equations by tensor product
  methods.
\newblock {\em Numer. Math.}, 6:185--199, 1964.

\bibitem{MM02}
B.~Mihaila and I.~Mihaila.
\newblock Numerical approximations using {C}hebyshev polynomial expansions:
  {E}l-{G}endi's method revisited.
\newblock {\em J. Phys. A}, 35(3):731--746, 2002.

\bibitem{Muite10}
B.K. Muite.
\newblock A numerical comparison of {C}hebyshev methods for solving fourth
  order semilinear initial boundary value problems.
\newblock {\em J. Comput. Appl. Math.}, 234(2):317--342, 2010.

\bibitem{OlverTownsend}
S.~Olver and A.~Townsend.
\newblock A fast and well-conditioned spectral method.
\newblock {\em To appear in SIAM Review (also see arXiv:1202.1347v2)}, 2013.

\bibitem{Shen94b}
J.~Shen.
\newblock Efficient spectral-{G}alerkin method {I}. direct solvers for second-
  and fourth-order equations by using {L}egendre polynomials.
\newblock {\em SIAM J. Sci. Comput.}, 15(6):1489--1505, 1994.

\bibitem{Shen02b}
J.~Shen.
\newblock A new dual-{P}etrov-{G}alerkin method for third and higher odd-order
  differential equations: Application to the {KDV} equation.
\newblock {\em SIAM J. Numer. Anal}, 41(5):1595--1619, 2003.

\bibitem{ShenTangWang2011}
J.~Shen, T.~Tang, and L.L. Wang.
\newblock {\em {Spectral Methods: Algorithms, Analysis and Applications}},
  volume~41 of {\em Series in Computational Mathematics}.
\newblock Springer-Verlag, Berlin, Heidelberg, 2011.

\bibitem{She.W07b}
J.~Shen and L.L. Wang.
\newblock Fourierization of the {L}egendre-{G}alerkin method and a new
  space-time spectral method.
\newblock {\em Appl. Numer. Math.}, 57(5-7):710--720, 2007.

\bibitem{shi2003theory}
Y.G. Shi.
\newblock {\em {Theory of Birkhoff Interpolation}}.
\newblock Nova Science Pub Incorporated, 2003.

\bibitem{szeg75}
G.~Szeg\"o.
\newblock {\em Orthogonal Polynomials (Fourth Edition)}.
\newblock AMS Coll. Publ., 1975.

\bibitem{Tref00}
L.N. Trefethen.
\newblock {\em {Spectral Methods in {MATLAB}}}, volume~10 of {\em Software,
  Environments, and Tools}.
\newblock Society for Industrial and Applied Mathematics (SIAM), Philadelphia,
  PA, 2000.

\bibitem{TrTr87}
L.N. Trefethen and M.R. Trummer.
\newblock An instability phenomenon in spectral methods.
\newblock {\em SIAM J. Numer. Anal.}, 24(5):1008--1023, 1987.

\bibitem{WangG09}
L.L. Wang and B.Y. Guo.
\newblock Interpolation approximations based on
  {G}auss-{L}obatto-{L}egendre-{B}irkhoff quadrature.
\newblock {\em J. Approx. Theory}, 161(1):142--173, 2009.

\bibitem{weideman2000matlab}
J.A. Weideman and S.C. Reddy.
\newblock A {MATLAB} differentiation matrix suite.
\newblock {\em ACM Transactions on Mathematical Software (TOMS)},
  26(4):465--519, 2000.

\bibitem{Weideman1988}
J.A.C. Weideman and L.N. Trefethen.
\newblock The eigenvalues of second-order spectral differentiation matrices.
\newblock {\em SIAM J. Numer. Anal.}, 25(6):1279--1298, 1988.

\bibitem{Welfert1994}
B.D. Welfert.
\newblock On the eigenvalues of second-order pseudospectral differentiation
  operators.
\newblock {\em Comput. Methods Appl. Mech. Engrg.}, 116(1-4):281--292, 1994.
\newblock ICOSAHOM'92 (Montpellier, 1992).

\bibitem{Zebib84}
A.~Zebib.
\newblock A {C}hebyshev method for the solution of boundary value problems.
\newblock {\em J. Comput. Phys.}, 53(3):443--455, 1984.

\bibitem{ZhangInp2012}
Z.M. Zhang.
\newblock Superconvergence points of polynomial spectral interpolation.
\newblock {\em SIAM J. Numer. Anal.}, 50(6):2966--2985, 2012.

\end{thebibliography}
\end{document}